\theoremstyle{theorem}
\newtheorem{theorem}{Theorem}[section]
\newtheorem{proposition}[theorem]{Proposition}
\newtheorem{lemma}[theorem]{Lemma}
\newtheorem{question}[theorem]{Question}
\newtheorem{corollary}[theorem]{Corollary}
\theoremstyle{definition}
\newtheorem{remark}[theorem]{Remark}
\newcommand{\Q}{\mathbb{Q}}
\newcommand{\wh}[1]{\widehat{#1}}
\newcommand{\A}{\alpha}
\newcommand{\n}{\beta}
\newcommand{\g}{\gamma}
\newcommand{\pd}{\partial}
\newcommand{\X}{\times}
\newcommand{\be}{\begin{enumerate}}
\newcommand{\ee}{\end{enumerate}}
\newcommand{\T}{\mathcal T}
\newcommand{\la}{\langle}
\newcommand{\ov}{\overline}
\def\@seccntformat#1{%
  \protect\textup{\protect\@secnumfont
    \ifnum\pdfstrcmp{subsection}{#1}=0 \bfseries\fi% subsection # in \bfseries
    \csname the#1\endcsname
    \protect\@secnumpunct
  }%
}  
\newtheorem*{rep@theorem}{\rep@title}
\newcommand{\newreptheorem}[2]{%
\newenvironment{rep#1}[1]{%
 \def\rep@title{#2 \ref{##1}}%
 \begin{rep@theorem}}%
 {\end{rep@theorem}}}
\begin{document}

\rhead{\thepage}
\lhead{\author}
\thispagestyle{empty}

%\tableofcontents
%\listoffigures

\raggedbottom
\pagenumbering{arabic}
\setcounter{section}{0}

%%%%%%%%%%%%%%%%%%%%%%%%%%%%%%%%%%%%%%%%%%%%%%%%%%%%%%%%
%%%%%%%%%%%%%%%%%%%%%%%%%%%%%%%%%%%%%%%%%%%%%%%%%%%%%%%%
%%%%%%%%%%%%%%%%%%%%%%%%%%%%%%%%%%%%%%%%%%%%%%%%%%%%%%%%

\title{A family of Andrews-Curtis trivializations via 4-manifold trisections}
%Date{\today}

\author{Ethan Romary}
\address{Berkshire Hathaway Homestate Companies}
\email{eromary@bhhomestate.com}
\urladdr{}

\author{Alexander Zupan}
\address{University of Nebraska-Lincoln, Lincoln, NE 68588}
\email{zupan@unl.edu}
\urladdr{http://www.math.unl.edu/azupan2}

\begin{abstract}
An R-link is an $n$-component link $L$ in $S^3$ such that Dehn surgery on $L$ yields $\#^n(S^1 \X S^2)$.  Every R-link $L$ gives rise to a geometrically simply-connected homotopy 4-sphere $X_L$, which in turn can be used to produce a balanced presentation of the trivial group.  Adapting work of Gompf, Scharlemann, and Thompson, Meier and Zupan produced a family of R-links $L(p,q;c/d)$, where the pairs $(p,q)$ and $(c,d)$ are relatively prime and $c$ is even.  For this family, $L(3,2;2n/(2n+1))$ induces the infamous trivial group presentation $\langle x,y \, | \, xyx=yxy, x^{n+1}=y^n \rangle$, a popular collection of potential counterexamples to the Andrews-Curtis conjecture for $n \geq 3$.  In this paper, we use 4-manifold trisections to show that the group presentations corresponding to a different family, $L(3,2;4/d)$, are Andrews-Curtis trivial for all $d$.
\end{abstract}

\maketitle

\section{Introduction}

The famous \emph{Andrews-Curtis conjecture}~\cite{AC} asserts that any balanced presentation
\[\langle x_1,\dots,x_n \, | \, r_1,\dots,r_n \rangle\]
of the trivial group can be converted to the trivial presentation $\langle x_1,\dots,x_n \, | \, x_1,\dots,x_n \rangle$ by a finite sequence of the following moves:

\begin{enumerate}
\item Replace a relator $r_i$ by $r_i^{-1}$;
\item Replace a relator $r_i$ by $r_ir_j$, where $i \neq j$;
\item Replace a relator $r_i$ by $x_jr_ix_j^{-1}$; and
\item Add or delete a trivial generator/relator pair $x_{n+1}$ and $r_{n+1} = x_{n+1}$.
\end{enumerate}

A presentation $P$ that admits such a trivialization is called \emph{AC-trivial}.  Although the conjecture remains open, there are interesting families of potential counterexamples, many arising from constructions in low-dimensional topology.  Perhaps the best known family in this category is the set of presentations
\[ P_n = \la x,y \, | \, xyx=yxy, x^{n+1}=y^n\rangle,\]
coming from a collection $H_n$ of handle decompositions of the 4-sphere, each with two 1-handles and two 2-handles~\cite{AK,gompf}.  The presentations $P_n$ are not known to be AC-trivial for $n \geq 3$, and they are widely believed to be probable counterexamples to the Andrews-Curtis conjecture.

A related notion is that of an \emph{R-link}, an $n$-component link $L \subset S^3$ such that some Dehn surgery on $L$ yields $\#^n(S^1 \X S^2)$.  Every R-link $L$ naturally gives rise to a balanced presentation $P(L)$ of the trivial group, and in~\cite{GST}, the authors constructed a family of R-links $L_n$ with the property that $P(L_n) = P_n$, the presentations given above.  This construction was generalized by Jeffrey Meier and the second author to produce an R-link $L(p,q;c/d)$ for any co-prime $p$ and $q$ and $c/d \in \Q$ with $c$ even.  With these parameters, $L(3,2;2n/(2n+1))$ is stably equivalent (defined below) to the Gompf-Scharlemann-Thompson links $L_n$~\cite{FHRkS}.  Let $P(p,q;c/d)$ denote the presentation $P(L(p,q;c/d))$.  We have the following natural question:

\begin{question}
Which presentations $P(p,q;c/d)$ can be AC-trivialized?
\end{question}

In~\cite{GST}, the authors showed that the link $L(3,2;0/1)$ is handle-slide equivalent to the unlink, and in forthcoming work, the second author and collaborators show that for all $d$, the links $L(3,2;2/d)$ have the same property, from which it follows that the presentations $P(3,2;0/d)$ and $P(3,2;2/d)$ are AC-trivial~\cite{HNPZ}.  The case $c=4$ is more complicated, and the corresponding question for the links $L(3,2;4/d)$ remains open.  However, in this paper, we prove
 
\begin{theorem}\label{main}
Every presentation of the form $P(3,2;4/d)$ is AC-trivial.
\end{theorem}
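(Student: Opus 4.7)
The plan is to realize each presentation $P(3,2;4/d)$ inside an explicit trisection diagram for the homotopy 4-sphere $X_{L(3,2;4/d)}$, and then exhibit a sequence of trisection moves that converts the diagram to the standard one for $S^4$. The translation from trisection moves to Andrews-Curtis moves is well-behaved: handle slides within one of the two cut systems that encode relators correspond to moves of type (2), isotopies across the generator-encoding system correspond to conjugations of type (3), and destabilizations remove trivial generator/relator pairs of type (4). Thus, trivializing the diagram trivializes the presentation.

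First, I would write down an explicit trisection diagram $\mathcal{D}_d$ for $X_{L(3,2;4/d)}$ using the standard procedure that turns a geometrically simply connected handle decomposition of a homotopy $S^4$ into a trisection. With the $\alpha$-curves fixed, the presentation $P(3,2;4/d)$ is read off from the $\beta$- and $\gamma$-curves, and the parameter $d$ enters as a winding number controlling how many times certain curves wrap around a fixed underlying pattern. This produces a combinatorially uniform family $\{\mathcal{D}_d\}$ indexed by the odd integers $d$ coprime to $4$.

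Next, I would carry out an inductive reduction: a fixed sequence of handle slides (and, where needed, destabilizations) that transforms $\mathcal{D}_d$ into $\mathcal{D}_{d'}$ for a strictly smaller value $|d'|$, for instance $d' = d - 4$, so that the induction terminates at a small number of base cases. These base cases should then be handled either directly, or by appeal to the AC-triviality of $P(3,2;0/d)$ and $P(3,2;2/d)$ noted in the introduction, after showing that the base diagram agrees with one of those cases up to trisection moves.

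The main obstacle is the bookkeeping needed to specify handle slides whose net effect on the $\gamma$-curves uniformly decreases the winding number $d$, while preserving the structure of the $\beta$-curves that encodes the first relator of $P(3,2;4/d)$. The slopes grow linearly in $d$, and slides on the two cut systems interact noncommutatively, so the real content lies in producing a single universal reduction scheme valid for every admissible $d$ rather than a case-by-case argument stratified by the residue of $d$ modulo some small integer. Once the inductive step is established and the base cases verified, the theorem follows by tracking the induced Andrews-Curtis moves through the trisection reduction.
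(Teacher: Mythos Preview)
Your plan has a genuine gap: converting the trisection diagram for $X_{L(3,2;4/d)}$ to the standard diagram for $S^4$ via handle slides and (de)stabilizations is precisely the assertion that the trisection $\mathcal{T}(4/d)$ is standard. The paper explicitly leaves this open (Question~\ref{sec:quest}.2), so you are proposing to prove the theorem by first establishing a strictly stronger and currently unresolved statement. AC-triviality of the induced presentation is weaker than standardness of the trisection, and there is no reason to expect the diagrammatic reduction you sketch to go through. Your base-case strategy is also confused: reducing $d$ by $4$ within the family $c=4$ never lands in the $c=0$ or $c=2$ families, so the AC-triviality of $P(3,2;0/d)$ and $P(3,2;2/d)$ is not directly available to close the induction.

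The paper's route is quite different and avoids any claim about standardness. The trisection is used only once, to \emph{extract} an explicit two-generator two-relator presentation, and the argument is then purely algebraic on the level of words. Contrary to your suggestion that one should seek a uniform scheme and avoid stratifying by residue classes, the paper deliberately splits into the cases $d=4n+1$ and $d=4n+3$; in each case a careful choice of arcs and curves on the fiber surface, together with the Dehn-twist relation $\tau^n(V_{4/d})=V_{4/(d+4n)}$, yields a closed-form formula for the relators as words in $x,y$ with exponents linear in $n$. For example, when $d=4n+1$ one obtains
\[
\langle x,y \mid \overline{x}(y\overline{x})^n \overline{y}(\overline{y}x)^n y,\; \overline{x}(y\overline{x})^n \overline{y}(x\overline{y})^n \rangle .
\]
From there a short explicit sequence of Andrews--Curtis moves (invert a relator, multiply, cyclically permute) collapses the presentation to the trivial one. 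No inductive descent on $d$ and no diagrammatic handle slides are required once the presentation is in hand.
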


The proof breaks into two cases, separated into Proposition~\ref{prop:4n+1} (dealing with the case $d = 4n+1)$ and Proposition~\ref{prop:4n+3} (dealing with the case $d = 4n+3$).  For both proofs, we use trisections of the closed 4-manifolds $X_{L(3,2;4/d)}$ arising from the R-links $L(3,2;4/d)$ in order to construct the presentations $P(3,2;4/d)$.

\begin{remark}
Various sources in the literature differ on whether to allow move (4); in some cases, AC-triviality is defined only with moves (1)-(3), and those sources often use \emph{stable AC-triviality} to allow move (4) as well.  In this paper, AC-triviality will always allow moves (1)-(4).
\end{remark}

\subsection{Organization}

In Section~\ref{sec:prelim}, we establish the necessary background material for the paper.  Section~\ref{sec:4n+1} deals with the first case of the main theorem, while Section~\ref{sec:4n+3} deals with the second case.  We conclude in Section~\ref{sec:quest} with several questions for further investigation.

\subsection{Acknowledgements}
We are grateful to Jeffrey Meier for helpful conversations and for comments on an earlier draft of this paper.  AZ was supported by NSF awards DMS-1664578 and DMS-2005518 during the completion of this work.  

\section{Preliminaries}\label{sec:prelim}

We work in the smooth category throughout.

\subsection{AC-equivalence and automorphisms}

If $P$ and $P'$ are two group presentations related by moves (1)-(4) above, we say that $P$ and $P'$ are \emph{AC-equivalent}, and we write $P \sim P'$.  There is an additional move, the transformation move, that we can apply to a group presentation $P = \langle x_1,\dots, x_n \, | \, r_1,\dots, r_n \rangle$:

\begin{enumerate}
\item[(5)] For an automorphism $\psi$ of the free group $F_n$ generated by $x_1,\dots,x_n$, replace every relator $r_i$ with its image $\psi(r_i)$.
\end{enumerate}

Equivalence of presentations allowing moves (1)-(5) is called \emph{$Q^{**}$-equivalence} \cite{fern,metzler1,metzler2}.  To our knowledge, it remains open whether $Q^{**}$-equivalence is stronger than AC-equivalence; a detailed discussion can be found in Section 3 of~\cite{PU}.  However, the following is known:

\begin{lemma}\label{auto}\cite{BM, mias}
If a presentation $P$ can be converted to the trivial presentation via moves (1)-(3) and (5), then $P$ can be converted to the trivial presentation via moves (1)-(3).
\end{lemma}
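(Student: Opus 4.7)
The plan is to show that any sequence of moves (1)--(3) and (5) that trivializes $P$ can be rearranged so that all type (5) moves are pushed to the very end of the sequence, then eliminated. The first ingredient is a commutation lemma for a type (5) move $\psi$ followed by a move of type (1), (2), or (3): if the sequence contains $P' \xrightarrow{\psi} \psi(P') \xrightarrow{m} Q$, then it can be replaced by $P' \xrightarrow{m'} \psi^{-1}(Q) \xrightarrow{\psi} Q$, where $m'$ is a short sequence of type (1)--(3) moves performed on $P'$. For moves of types (1) and (2) this is immediate, since inversion and right-multiplication pull back through $\psi$ to a move of the same type on $P'$. For type (3), a conjugation $s_i \mapsto x_j s_i x_j^{-1}$ pulls back to conjugation of $r_i$ by the word $w := \psi^{-1}(x_j)$; this is realized by successively applying the type (3) conjugation to each letter of $w$, from the innermost outward. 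By iterating this commutation and composing any adjacent pair of type (5) moves, I can rewrite the original sequence so that every type (5) move occurs after every type (1)--(3) move, and all such automorphisms collapse into a single trailing automorphism $\psi$.

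At this stage the sequence has the form $P \to \cdots \to Q \xrightarrow{\psi} (x_1,\dots,x_n)$ with $Q = (\psi^{-1}(x_1),\dots,\psi^{-1}(x_n))$ and every arrow before $\psi$ of type (1)--(3). Because $\psi$ is an automorphism of $F_n$, the tuple $Q$ is an honest free basis of $F_n$. By Nielsen's theorem, any two bases of $F_n$ are related by a finite composition of elementary Nielsen transformations (inversion of a generator, right-multiplication $x_i \mapsto x_i x_j$, and transposition, the last of which reduces to the first two). Each such transformation corresponds directly to an AC move of type (1) or (2), so the final step $Q \xrightarrow{\psi} (x_1,\dots,x_n)$ can be replaced by a sequence of moves of types (1)--(2), eliminating the trailing $\psi$ and yielding a trivialization of $P$ using moves (1)--(3) alone.

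The main obstacle is the type (3) case of the commutation step: one must be careful that conjugation by an arbitrary word $w \in F_n$ is realizable as a finite composition of single-generator conjugations, and that this procedure interacts correctly with the rest of the relator tuple. The bookkeeping is routine once this observation is in hand, and the concluding appeal to Nielsen's theorem is clean, so the substance of the argument really is in the commutation lemma.
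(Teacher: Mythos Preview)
The paper does not give its own proof of this lemma; it is quoted from the literature (the citations \cite{BM,mias} refer to Burns--Macedo\'{n}ska and Myasnikov), so there is no in-paper argument to compare against. Your argument is correct and is essentially the standard one: commute each automorphism move past the elementary moves so that a single composite automorphism $\psi$ remains at the end, then observe that the penultimate tuple $(\psi^{-1}(x_1),\dots,\psi^{-1}(x_n))$ is a free basis of $F_n$ and invoke Nielsen's theorem to replace the final step by moves of types~(1) and~(2). One point worth making explicit in the commutation step for move~(3): conjugating $r_i$ by an arbitrary word $w=\psi^{-1}(x_j)$ letter-by-letter requires conjugation by $x_k^{-1}$ as well as by $x_k$. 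This is the inverse of move~(3), and since AC-equivalence is tacitly a symmetric relation it is implicitly available, but it is not literally among the moves as listed; you should flag this as a convention rather than leave it unmentioned.
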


As a corollary, we have

\begin{corollary}\label{cor:auto}
If a presentation $P$ is $Q^{**}$-equivalent to the trivial presentation, then $P$ is AC-trivial.
\end{corollary}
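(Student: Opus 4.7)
The strategy is to reduce the corollary to Lemma~\ref{auto} by showing that any $Q^{**}$-trivialization of $P$ can be arranged so that all applications of move (4) occur at the very beginning or very end, leaving a middle segment that uses only moves (1)-(3) and (5).

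Concretely, suppose $P \to T$ is a $Q^{**}$-trivialization, where $T$ denotes the trivial presentation on the same number of generators as $P$. The key step is a rearrangement lemma: the given sequence can be replaced by one of the form
\[ P \to \cdots \to P^+ \to Q_1 \to \cdots \to Q_\ell \to T^+ \to \cdots \to T, \]
where $P \to P^+$ consists entirely of stabilizations (adding trivial generator/relator pairs via move (4)), $T^+ \to T$ consists entirely of destabilizations, and every intermediate step $Q_j \to Q_{j+1}$ uses only a move of type (1)-(3) or (5). Here $P^+$ and $T^+$ both have $N$ generators, where $N$ may be taken to be the maximum number of generators occurring along the original sequence, and $T^+$ is the trivial presentation on $N$ generators. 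The rearrangement rests on two commutation principles: a stabilization $(x_{n+1},x_{n+1})$ commutes past any subsequent move of type (1)-(3) or (5), where for (5) one simply extends the relevant automorphism by the identity on $x_{n+1}$; dually, a destabilization can be postponed past an earlier (1)-(3) or (5) move by lifting that move to the larger presentation that still carries the extra trivial generator. Iterating these two commutations pushes all stabilizations to the front of the sequence and all destabilizations to the back.

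Once this normal form is achieved, the middle segment $P^+ \to T^+$ exhibits $P^+$ as being convertible to the trivial presentation on $N$ generators using only moves (1)-(3) and (5). Applying Lemma~\ref{auto} to this middle segment produces a replacement that uses only moves (1)-(3). Concatenating the initial stabilizations $P \to P^+$, this (1)-(3) segment, and the final destabilizations $T^+ \to T$ then produces an AC-trivialization of $P$ using only moves (1)-(4), as required.

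The main obstacle is the rearrangement step. Although each commutation principle is straightforward in isolation, one must verify that every automorphism move in the original sequence admits a well-defined extension to $P^+$ that is compatible with the eventual destabilizations. This is essentially bookkeeping about automorphisms of free groups of varying ranks and their extension by identities on added generators, but it is the only point requiring genuine care; everything else is a direct appeal to Lemma~\ref{auto}.
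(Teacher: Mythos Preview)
Your proof is correct and follows essentially the same approach as the paper: push all stabilizations to the front, handle destabilizations separately, and apply Lemma~\ref{auto} to the resulting stabilized presentation $P^+$. The only cosmetic difference is that you push destabilizations to the end, whereas the paper simply omits them, noting that a presentation equivalent to a trivial presentation after a deletion is already equivalent to a (longer) trivial presentation before the deletion; both arguments yield the same middle segment to which Lemma~\ref{auto} applies.
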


\begin{proof}
Suppose that $P = \langle x_1,\dots,x_n \, | \, r_1,\dots, r_n \rangle$ admits a sequence of moves (1)-(5) converting $P$ to the trivial presentation.  Observe that any instances of additions via move (4) can be carried out before any of the other moves, since any automorphism $\psi$ used in move (5) extends by the identity over generators added after the automorphism would have been applied, and instances of deletions via move (4) need not be carried out, because any presentation equivalent to a trivial presentation after a deletion is also equivalent to a trivial presentation of longer length before the deletion.  Thus, it follows that for some $m \geq n$, the presentation
\[ P' = \langle x_1,\dots,x_n , x_{n+1} \dots, x_m\, | \, r_1,\dots, r_n, x_{n+1}, \dots, x_m \rangle\]
can be converted to the trivial presentation by moves of types (1)-(3) and (5).  By Lemma~\ref{auto}, $P'$ is AC-trivial, and since $P$ and $P'$ are related by moves of type (4), it follows that $P$ is AC-trivial as well.
\end{proof}

\subsection{R-links}

As mentioned above, an \emph{R-link} is an $n$-component link $L \subset S^3$ such that $L$ has a Dehn surgery yielding $\#^n(S^1 \X S^2)$.  Every R-link gives rise to a homotopy 4-sphere $X_L$ built with one 0-handle, $n$ 2-handles, $n$ 3-handles, and one 4-handle, where $L$ is the attaching link for the 2-handles, with framings giving by the Dehn surgery coefficients.  Inverting the handle decomposition of $X_L$ yields one 0-handle, $n$ 1-handles, $n$ 2-handles, and one 4-handle, which can be used to produce a balanced presentation for $\pi_1(X_L)$, the trivial group.

In general, an R-link $L$ does not induce a unique such presentation; for instance, a choice of co-cores of the 1-handles in the inverted handle decomposition determines a choice of the generators $x_1,\dots,x_n$ in the presentation $P(L)$, and a different choice induces an automorphism of the free group $F_n$, the fundamental group of the union of the 0-handle and the $n$ 1-handles.  Nevertheless, we can prove the following:

\begin{lemma}\label{lem:q}
Let $L$ be an R-link, and suppose that $P$ and $P'$ are two different presentations induced by $L$.  Then $P$ and $P'$ are $Q^{**}$-equivalent.
\end{lemma}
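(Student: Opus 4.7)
The plan is to catalog the choices that go into producing a presentation from the inverted handle decomposition of $X_L$ and show that changing each choice alters the presentation by one of the moves (1)--(5).

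Let $W_1$ denote the union of the 0-handle and the $n$ 1-handles of the inverted handle decomposition of $X_L$, so that $\pd W_1 \cong \#^n(S^1 \X S^2)$ and $\pi_1(W_1) \cong F_n$. The 2-handles are attached along a framed link $L^* \subset \pd W_1$ whose components $c_1, \ldots, c_n$ each represent a conjugacy class $[c_i]$ in $F_n$. An induced presentation $P = \langle x_1, \ldots, x_n \, | \, r_1, \ldots, r_n\rangle$ arises from three pieces of data: (a) a choice of ordered basis $\{x_i\}$ for $\pi_1(W_1) \cong F_n$ coming from oriented co-cores of the 1-handles (together with paths to a basepoint); (b) an ordering of the 2-handles; and (c) for each $i$, a choice of representative $r_i \in F_n$ of $[c_{\sigma(i)}]$, where $\sigma$ is the chosen permutation from (b).

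Given two induced presentations $P$ and $P'$, I would first align the orderings: any permutation of generators is itself an automorphism of $F_n$, handled by move (5), while permutations of the relators are realized by standard sequences of moves (1)--(3). Orientation reversal of a co-core is likewise an automorphism (handled by move (5)), and orientation reversal of an attaching circle is handled by move (1). After aligning orderings and orientations, the two choices of basis in (a) differ by an automorphism $\psi$ of $F_n$, and applying move (5) converts $\{r_i\}$ into $\{\psi(r_i)\}$. At this point, $\psi(r_i)$ and the corresponding relator $r'_i$ of $P'$ represent the same conjugacy class $[c_{\sigma(i)}]$ in $F_n$, so $r'_i = w_i \psi(r_i) w_i^{-1}$ for some $w_i \in F_n$; writing each $w_i$ as a word in the generators, the conjugation is realized by a finite iteration of move (3). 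Composing these steps yields a sequence of moves (1)--(5) converting $P$ to $P'$.

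The bulk of the work is really bookkeeping, and the main obstacle is a careful verification that the data (a)--(c) above exhausts all the ambiguity in producing a presentation from the inverted handle decomposition. In particular, one must check that the conjugacy class of an attaching circle in $\pi_1(W_1)$ is independent of the isotopy class of $c_i$ inside $\pd W_1$ and of the path chosen from the basepoint, so that the only surviving freedom is the choice of basis (captured by move (5)) and the choice of cyclic/conjugation representative within a fixed conjugacy class (captured by move (3)).
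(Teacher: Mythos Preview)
Your proposal is correct and follows essentially the same approach as the paper: catalog the choices (co-cores/basis, orientations, base points, ordering) and show each is absorbed by moves (1)--(5), with the change of basis handled by the automorphism move (5). If anything, your treatment is slightly more careful than the paper's in explicitly addressing the ordering of the 2-handles and in noting that after the automorphism the relators may still differ by a conjugation (handled by iterating move (3)), whereas the paper asserts exact equality $\sigma(r_i)=s_i$ once base points and orientations are aligned.
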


\begin{proof}
Suppose that $P = \langle x_1,\dots,x_n \, | \, r_1,\dots, r_n\rangle $ and $P'= \langle y_1,\dots y_n \, | \, s_1,\dots, s_n\rangle $ are two presentations induced by $L$.  In the context of the AC-moves, we assume that all groups have the same generating set.  Thus, let $\iota: \langle y_1,\dots,y_n \rangle \rightarrow \langle x_1,\dots,x_n \rangle$ be the map $\iota(y_i) = x_i$, and let $P'' = \iota(P') = \langle x_1,\dots,x_n \,  | \, \iota(s_1),\dots,\iota(s_n) \rangle$, so that $P''$ is identical to $P'$ but uses $x_i$'s instead of $y_i$'s.

There are several sets of choices we make to extract $P$ and $P'$ from $L$:  A choice of co-cores of the 1-handles in the inverted handle decomposition, and a choice of a base point and orientation for each component of the attaching link $L^*$ for the 2-handles in the inverted handle decomposition, since we need a place to begin and a direction when using each component of $L^*$ to read off a relator.  Different choices of orientations yield relators related by a move of type (1).  Likewise, any two choices of base points can be related by cyclic permutations of the corresponding relators, and as such these relators are related by moves of type (3).  Thus, we may assume that $P$ and $P'$ arise from identical choices of base points and orientations of $L^*$.

Regarding the choices of co-cores of the 1-handles, there is a diffeomorphism of the boundary of the union of the 0-handle and 1-handles sending one choice to any other, inducing an automorphism $\sigma$ of the free group.  In other words, each $x_i$ can be expressed as a word in the $y_i$'s, and let $\sigma: \langle x_1,\dots, x_n \rangle \rightarrow \langle y_1,\dots,y_n \rangle$ be the isomorphism such that $\sigma(x_i)$ is the expression of $x_i$ as this word.  Observe that the relators in both presentations are determined by the fixed attaching link $L^*$ with the same base points and orientations, it follows that (possibly after reindexing), we have $\sigma(r_i) = s_i$.

Finally, define $\psi = \iota \circ \sigma$.  Then we have
\[ \langle x_1,\dots,x_n \, | \, \psi(r_1), \dots \psi(r_n) \rangle =\langle x_1,\dots,x_n \, | \, \iota(s_1), \dots \iota(s_n) \rangle = P''.\]

We conclude that $P$ and $P''$ are related by a move of type (5), and thus any two such presentations $P$ and $P''$ are $Q^{**}$-equivalent.
\end{proof}

In view of Lemma~\ref{lem:q}, the $Q^{**}$-equivalence class of the the presentation induced by the R-link $L$ is well-defined, and so we use $P(L)$ to denote this equivalence class.  By Corollary~\ref{cor:auto}, if we can show that some presentation in the equivalence class $P(L)$ is AC-trivial, then every presentation in $P(L)$ is AC-trivial.  For this reason and for the purposes of proving AC-triviality, we often blur the distinction and abuse notation to let $P(L)$ denote any representative of the equivalence class $P(L)$.

There are also moves on the R-link $L$ that leave $P(L)$ invariant:  If $L$ and $L'$ are related by a sequence of handle-slides, we say $L$ and $L'$ are \emph{handle-slide equivalent}.  More generally, if $L$ and $L'$ are R-links (possibly with different numbers of components) and $U$ and $U'$ are unlinks, such that the split links $L \sqcup U$ and $L' \sqcup U'$ are handle-slide equivalent, we say that $L$ and $L'$ are \emph{stably equivalent}.  We have the following well-known lemma.

\begin{lemma}\label{lem:stable}
If $L$ and $L'$ are stably equivalent, then $P(L) = P(L')$.
\end{lemma}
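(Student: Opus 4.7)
The plan is to verify the lemma on the two elementary moves underlying stable equivalence: handle-slides among components of the R-link, and the addition (or removal) of split unknotted components. Since $P(L)$ denotes a $Q^{**}$-equivalence class and Lemma~\ref{lem:q} lets us work with any convenient choice of co-cores, base points, and orientations for the 2-handle attaching link in the inverted handle decomposition, it suffices, for each elementary move taking $L$ to $L'$, to produce a sequence of AC-moves (1)--(4) between some representative of $P(L)$ and some representative of $P(L')$.

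For the handle-slide step, suppose $L$ and $L'$ differ by a single slide of one 2-handle over another in the original handle decomposition of $X_L$. This does not change the diffeomorphism type of $X_L$, and by duality it corresponds to a 2-handle slide in the inverted handle decomposition, i.e., a slide of one component of the attaching link $L^*$ for the dual 2-handles over another. Reading the relators off $L^*$ in the free group generated by the co-cores of the 1-handles, such a slide replaces a relator $r_i$ by a word of the form $r_i \cdot w r_j^{\pm 1} w^{-1}$ with $j \neq i$, where $w$ is the word determined by the arc along which the slide is performed. I would realize this transformation by first conjugating $r_j$ by $w$ via several applications of move (3), then multiplying $r_i$ by the result via move (2) (together with move (1) to account for the sign), and finally conjugating $r_j$ back to its original form. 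This exhibits $P(L)$ and $P(L')$ as AC-equivalent, hence $Q^{**}$-equivalent.

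For the stabilization step, suppose $L' = L \sqcup U$ where $U$ is a split unknot. The hypothesis that $L'$ is an R-link forces the surgery coefficient on $U$ to be $0$, so that the Dehn surgery contributes an additional $S^1 \X S^2$ summand. In the inverted handle decomposition of $X_{L'}$, the component $U$ yields one new 1-handle and one new dual 2-handle attached along a meridian of that 1-handle inside a 4-ball disjoint from the rest of the diagram. With the obvious choice of co-core, the contribution to the presentation is a trivial generator/relator pair $(x_{n+1}, r_{n+1} = x_{n+1})$, exactly a move of type (4). Iterating handles the case of an unlink with several components. Combining the two steps, any sequence of handle-slides and stabilizations relating $L \sqcup U$ to $L' \sqcup U'$ descends to a sequence of moves (1)--(4) between chosen representatives, giving $P(L) = P(L')$ as $Q^{**}$-equivalence classes. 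The main technical point is the precise translation in the first step between a Kirby handle-slide and the corresponding AC-transformation of the associated relator; this correspondence is classical, and the rest of the argument is careful bookkeeping using Lemma~\ref{lem:q}.
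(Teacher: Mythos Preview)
Your proof is correct and follows essentially the same approach as the paper: a handle-slide of one component of $L$ over another dualizes to a handle-slide in the inverted decomposition, which at the level of relators is realized by moves (1)--(3), while adding or removing a split $0$-framed unknot contributes a trivial generator/relator pair and hence a move of type (4). Your treatment is in fact more explicit than the paper's about the form $r_i \cdot w r_j^{\pm 1} w^{-1}$ of the new relator and how to realize it via conjugation and multiplication.
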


\begin{proof}
Any handle-slide of $L$ over $L'$ induces a handle-slide of $(L')^*$ over $L^*$, the dual attaching links in the inverted handle decompositions.  As such, the corresponding presentations can be related by moves of type (2) (and possibly other moves corresponding to the choices referenced in the proof of Lemma~\ref{lem:q}).  If $L$ and $L'$ are stably handle-slide equivalent, then there are unlinks $U$ and $U'$ such that $L \sqcup U$ and $L' \sqcup U'$, are handle-slide equivalent, so that $P(L \sqcup U) = P(L' \sqcup U')$.  In this case, $P(L)$ and $P(L \sqcup U)$ are related by moves of type (4); thus, $P(L) = P(L \sqcup U)$.  Similarly, $P(L') = P(L' \sqcup U')$, completing the proof.
\end{proof}

The generalized Property R conjecture (GPRC) asserts that every R-link $L$ is handle-slide equivalent to an unlink, and the stable version of the GPRC asserts that $L$ is stably handle-slide equivalent to an unlink.  Both conjectures, if true, would imply that every presentation $P(L)$ arising in this way is AC-trivial.  For a detailed discussion of R-links and the AC-conjecture, the reader is encouraged to refer to~\cite{GST,MSZ}. 

\subsection{The family $L(3,2;c/d)$}

In~\cite{MZDehn}, Meier and the second author used work in~\cite{GST} and~\cite{Schar} to introduce the family $L(3,2;c/d)$ of R-links, and in~\cite{FHRkS}, they extended the construction to $L(p,q;c/d)$.  The construction is described in much greater detail in~\cite{MZDehn} and~\cite{FHRkS} but we briefly summarize here:  Let $Q$ be the square knot $3_1 \# \ov{3_1}$, and let $F$ be the fiber for $Q$ in $S^3$, a surface with genus two and one boundary component.  Then $S^3_0(Q)$, the closed 3-manifold resulting from 0-surgery on $Q$, is fibered with fiber $\wh F$, the closed genus two surface obtained by capping off the boundary component of $F$ with a disk.  Viewing $\wh F$ as the quotient of an annulus with hexagonal boundary components identified in opposite pairs as shown in Figure~\ref{fig:2} below, the \emph{closed monodromy} $\wh \varphi: \wh F \rightarrow \wh F$ associated to $S^3_0(Q)$ is a clockwise rotation of $\pi/3$ radians.

In addition, if $S$ represents the 2-sphere with four cone points of order three, there is a branched covering map $\rho:\wh F \rightarrow S$ with the property that $\rho \circ \wh \varphi = \rho$.  Curves in $S$ that avoid the cone points are parametrized by the extended rational numbers $\overline{\Q}$, and for $c$ even, the curve $\lambda_{(c/2)/d}$ corresponding to the rational number $(c/2)/d$ lifts via $\rho$ to three curves $V_{c/d}$, $V'_{c/d}$, and $V''_{c/d}$ contained in $\wh F$ and permuted by $\wh \varphi$.  These curves can be chosen to lie in $F$, and as such the link $Q \cup V_{c/d} \cup V'_{c/d} \cup V''_{c/d} \subset S^3$ is an R-link, stably handle-slide equivalent to any of its two component sublinks.  In~\cite{FHRkS}, the link $L(3,2;c/d)$ is defined to be $V_{c/d} \cup V'_{c/d}$, but for the purposes of determining whether $P(3,2;c/d)$ is AC-trivial, we can use any of these links by Lemmas~\ref{lem:q} and~\ref{lem:stable}

\begin{remark}
The convention here for $c/d$ agrees with~\cite{FHRkS} but differs from~\cite{MZDehn} and~\cite{Schar} in that the numerator $c$ is doubled in our setting.  This doubling is explained in detail in Remarks 4.1 and 4.12 of~\cite{FHRkS}.
\end{remark}

\begin{remark}
It was proved in~\cite{Schar} that the links $L(3,2;2n/(2n+1))$ are stably handle-slide equivalent to the links $L_n$ appearing in~\cite{GST}, and thus $P(3,2;2n/(2n+1))$ is equivalent to the famous presentation $P_n$ described in the introduction.
\end{remark}

\subsection{Trisecting $X_{L(3,2;c/d)}$}

Gay and Kirby introduced 4-manifold trisections in~\cite{GK}.  A $(g;k_1,k_2,k_3)$-\emph{trisection} $\mathcal{T}$ of a closed, smooth 4-manifold $X$ is a decomposition $X = X_1 \cup X_2 \cup X_3$ with the properties

\begin{enumerate}
\item Each $X_i$ is a 4-dimensional 1-handlebody with $\text{rk}(\pi_1(X_i)) = k_i$;
\item Each $H_i = X_{i-1} \cap X_{i}$ is a 3-dimensional genus $g$ handlebody; and
\item $\Sigma = X_1 \cap X_2 \cap X_3$ is a genus $g$ surface.
\end{enumerate}

A trisection $\mathcal{T}$ is determined by the union $H_1 \cup H_2 \cup H_3$, which is in turn determined by a collection of three cut systems, $\A$, $\n$, and $\g$, contained in $\Sigma$, called the \emph{central surface} of the trisection.  The triple $(\A,\n,\g)$ is called a \emph{trisection diagram}.

We have already discussed the closed monodromy $\wh\varphi: \wh F \rightarrow \wh F$ for the square knot $Q$, but in this setting, we will need to use the monodromy $\varphi: F \rightarrow F$ for $Q$, which is required to be the identity on $\pd F = Q$.  The monodromy $\varphi$ consists of a $\pi/3$ rotation as before, but this time followed by an isotopy that drags the boundary component of $F$ back to where it started.  The curves $V_{0/1}$, $V'_{0/1}$, and $V''_{0,1}$ and their images under $\varphi$ are shown in Figure~\ref{fig:2}.  (See also Figure 9 of~\cite{Schar}.)

\begin{figure}[h!]
\begin{subfigure}{.48\textwidth}
  \centering
  \includegraphics[width=1\linewidth]{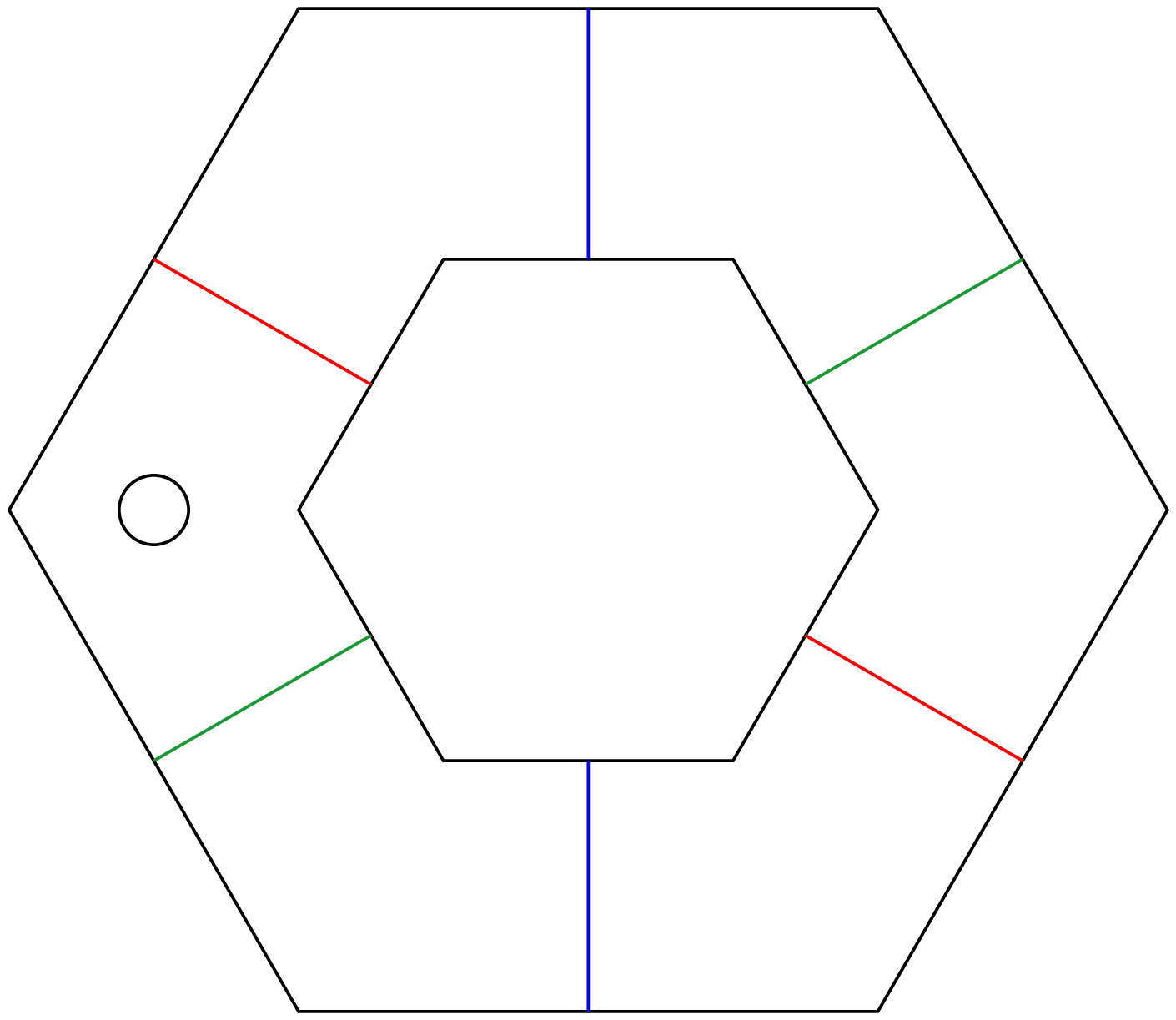}
  \label{fig:braid1}
\end{subfigure}
\begin{subfigure}{.48\textwidth}
  \centering
  \includegraphics[width=1\linewidth]{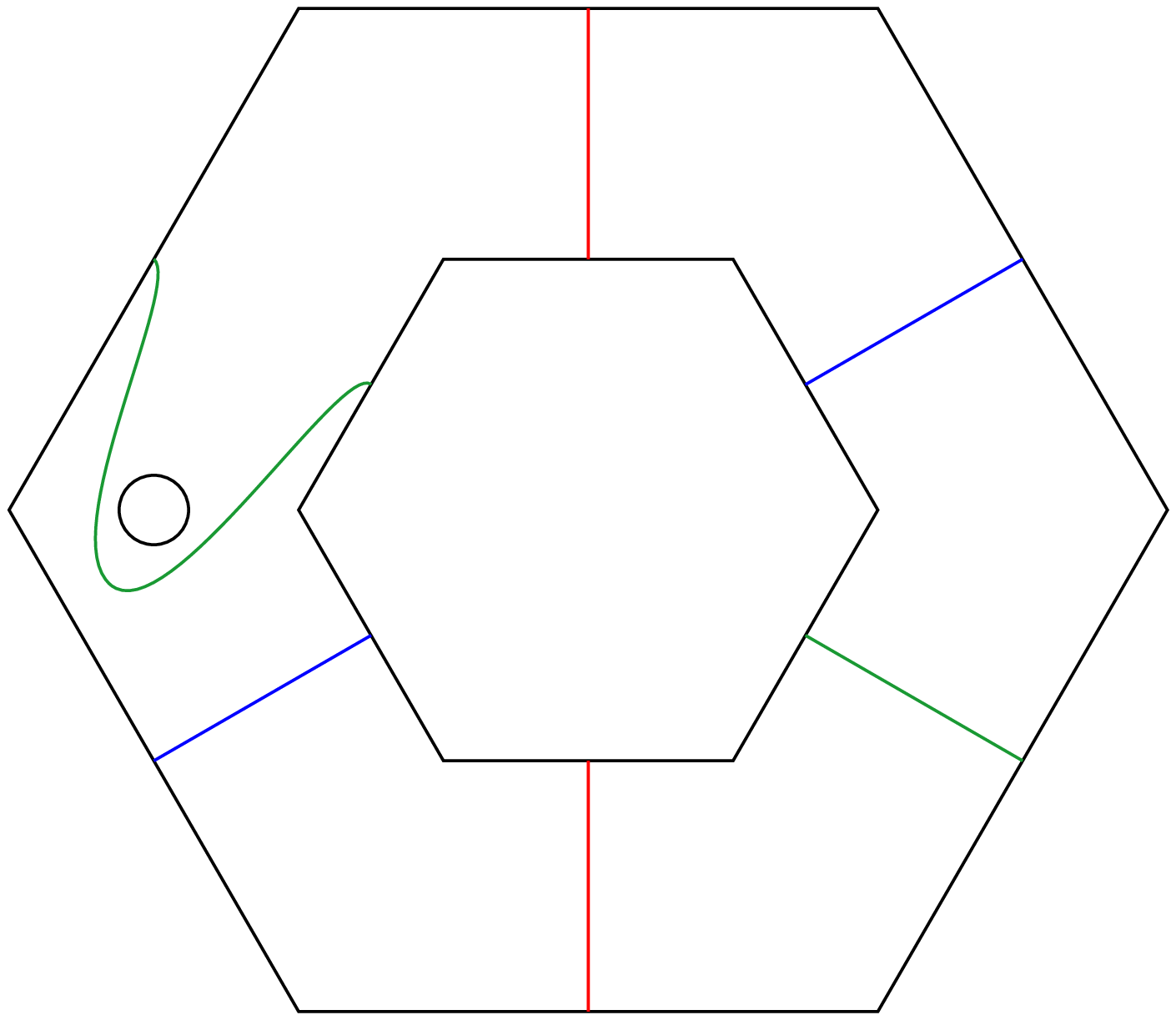}
  \label{fig:braid2}
\end{subfigure}
  \caption{At left, curves $V_{0/1}$ (red), $V'_{0/1}$ (blue), and $V''_{0/1}$ (green).  At right, their images under the monodromy $\varphi$.  Note that while the closed monodromy $\wh \varphi$ permutes the three curves, the monodromy $\varphi$ does not.}
\label{fig:2}
\end{figure}

Next, we define three cut systems, which will determine a trisection diagram for a trisection $\T(c/d)$ for $X_{L(3,2;c/d)}$.  Define $\Sigma = \pd(F \X I)$, where $\pd F \X I$ has been crushed to the single curve $\pd F$, so that we may view $\Sigma$ as $F\# \overline{F}$.  For a curve or arc $a$ embedded in $F$, let $\overline{a}$ denote the mirror image of $a$ contained in $\overline{F}$.  Let $a_1,a_2,a_3,a_4$ be four pairwise disjoint arcs in $F$ cutting $F$ into a disk, and define
\begin{eqnarray*}
\A &=& \{\varphi(a_1) \cup \overline{a_1}, \,\varphi(a_2) \cup \overline{a_2}, \,\varphi(a_3) \cup \overline{a_3},\, \varphi(a_4) \cup \overline{a_4}\} \\
\n &=& \{ a_1 \cup \overline{a_1},\,a_2 \cup \overline{a_2},\,a_3 \cup \overline{a_3},\, a_4 \cup \overline{a_4}\} \\
\g &=& \{V_{c/d},\,V'_{c/d},\,\overline{V_{c/d}},\,\overline{V'_{c/d}}\}.
\end{eqnarray*}

Then we have the following, which is Proposition 19 from~\cite{MZDehn}.

\begin{proposition}\label{diagram}
The triple $(\A,\n,\g)$ is a $(4;0,2,2)$-trisection diagram for a trisection $\T(c/d)$ of $X_{L(3,2;c/d)}$.
\end{proposition}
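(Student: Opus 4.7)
\emph{Plan.} The proof divides into two main tasks: verifying that $(\A,\n,\g)$ satisfies the axioms of a $(4;0,2,2)$-trisection diagram, and identifying the underlying closed 4-manifold as $X_{L(3,2;c/d)}$.

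First I would check that each of $\A$, $\n$, and $\g$ is a cut system on the genus-$4$ surface $\Sigma = F \# \overline{F}$. For $\n$, this is immediate: since the arcs $a_1,\dots,a_4$ cut $F$ into a disk, the doubled curves $a_i \cup \overline{a_i}$ cut $\Sigma$ into a sphere. Because $\varphi$ is a homeomorphism of $F$ fixing $\partial F$ pointwise, the arcs $\varphi(a_i)$ also cut $F$ into a disk, so the same conclusion holds for $\A$. For $\g$, I would invoke the construction of $V_{c/d}$ and $V'_{c/d}$ from~\cite{FHRkS} to confirm that they are disjoint non-separating simple closed curves in $F$ whose complement is planar; their mirrors then contribute the remaining cut curves in $\overline F$.

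Next I would verify the three pairwise Heegaard splitting conditions. The pair $(\A,\n)$ should yield a Heegaard splitting of $S^3$: the form of the curves $a_i \cup \overline{a_i}$ and $\varphi(a_i) \cup \overline{a_i}$ on the doubled surface identifies $H_\n \cup H_\A$ with the open book decomposition of $S^3$ having page $F$ and monodromy $\varphi$, which is precisely the fibering of $Q$. For $(\n,\g)$ and $(\g,\A)$, the goal is to show that each yields $\#^2(S^1 \X S^2)$. I would do this by exhibiting a sequence of handle-slides converting the diagram to a standard form in which two pairs of dual curves contribute two $S^1 \X S^2$ summands while the remaining pairs cancel; the symmetry between $F$ and $\overline F$ makes this plausible for $(\n,\g)$, and $(\g,\A)$ follows by applying $\varphi$ as a self-diffeomorphism of $\Sigma$ that carries one diagram to the other up to handle-slides.

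Finally, to identify the 4-manifold I would translate the trisection $\T(c/d)$ into a handle decomposition and compare it with the R-link handle decomposition of $X_{L(3,2;c/d)}$. The decomposition reconstructed from the trisection has one 0-handle, no 1-handles (since $k_1 = 0$), two 2-handles attached along the $\g$-curves $V_{c/d}, V'_{c/d}$ with framings read off from $\Sigma \subset F \X I$, together with two 3-handles and one 4-handle. This precisely matches the standard description of $X_{L(3,2;c/d)}$, in which $V_{c/d} \cup V'_{c/d}$ is the 2-handle attaching link with surgery framings inherited from $F$. The main obstacle I expect is rigorously matching these framings and verifying that the surface framing from $\Sigma$ agrees with the R-link framing needed to produce $\#^2(S^1 \X S^2)$; this requires a careful analysis of how the $\g$-curves sit simultaneously in the Heegaard surface and in the Dehn surgery diagram, and it is essentially the bookkeeping carried out in~\cite{MZDehn}.
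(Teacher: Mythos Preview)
The paper does not prove this proposition; it is quoted verbatim as Proposition~19 of~\cite{MZDehn}, with no argument given here.  So there is no ``paper's own proof'' to compare against beyond that citation, and your closing remark that the details are ``essentially the bookkeeping carried out in~\cite{MZDehn}'' is exactly how the present paper treats the statement.

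Your outline is the right shape for a direct proof, but two of the pairwise Heegaard checks are thinner than you suggest.  For $(\n,\g)$, ``the symmetry between $F$ and $\overline F$ makes this plausible'' is not an argument; the actual mechanism is that $V_{c/d}$ and $\overline{V_{c/d}}$ (and likewise $V'_{c/d}$ and $\overline{V'_{c/d}}$) cobound product annuli in $H_\n \cong F \times I$, so one can slide and isotope $\g$ to put two $\g$-curves parallel to curves bounding in $H_\n$, exhibiting the two $S^1\times S^2$ summands.  For $(\g,\A)$, applying $\varphi$ on the $F$ side and the identity on $\overline F$ does send $\n$ to $\A$, but it sends $\g$ to $\{\varphi(V_{c/d}),\varphi(V'_{c/d}),\overline{V_{c/d}},\overline{V'_{c/d}}\}$, and the paper explicitly notes (Figure~\ref{fig:2}) that $\varphi$ does \emph{not} permute the $V$-curves in $F$---only the closed monodromy $\wh\varphi$ does.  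So ``up to handle-slides'' is hiding a genuine step: you must use that $\varphi(V_{c/d})$ and $\varphi(V'_{c/d})$ are handle-slide equivalent (over $Q$, via $V''_{c/d}$) to curves in the original $\g$-system.  Neither of these is a fatal gap, but both require the specific geometry of the $V$-curves rather than abstract symmetry.
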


Finally, we will need a tool which we can use to extract a handle decomposition from a trisection, a restatement of parts of Lemma 13 from~\cite{GK}.

\begin{lemma}\label{handle}
Suppose $X = X_1 \cup X_2 \cup X_3$ is a $(g;k_1,k_2,k_3)$-trisection, with $H_i = X_{i-1} \cap X_i$.  Then $X$ has a handle decomposition with
\begin{enumerate}
\item one 0-handle and $k_1$ 1-handles (contained in $X_1$),
\item $g-k_2$ 2-handles (contained in $X_2)$, and
\item $k_3$ 3-handles (contained in $X_3$).
\end{enumerate}
In addition, a choice of $k_1$ pairwise disjoint and mutually nonseparating curves $C$ in $\Sigma$ bounding disks in both $H_1$ and $H_2$ represent the intersections of $k_1$ co-cores of the 1-handles with $\Sigma$.  Finally, an attaching link $L$ for the 2-handles can be obtained by choosing $g-k_2$ curves bounding disks in $H_3$ that are dual to $g-k_2$ curves bounding disks in $H_2$, and viewing $L$ as a framed link (with framing given by the surface $\Sigma)$ in the 3-manifold $\pd X_1$.
\end{lemma}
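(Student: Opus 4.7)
I will build the handle decomposition of $X$ by attaching the sectors $X_1$, $X_2$, $X_3$ in sequence, using the identification $X_i \cong \natural^{k_i}(S^1 \X D^3)$ at each step.

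First, equip $X_1$ with its standard handle decomposition as a 4-dimensional 1-handlebody, yielding the one 0-handle and $k_1$ 1-handles. The boundary $\pd X_1 \cong \#^{k_1}(S^1 \X S^2)$ carries the Heegaard splitting $H_1 \cup_\Sigma H_2$, and the co-cores of the 1-handles are $k_1$ disjoint 3-balls whose boundary 2-spheres represent the essential $\{\text{pt}\} \X S^2$ factors of $\pd X_1$. By Waldhausen's uniqueness theorem for Heegaard splittings of $\#^{k_1}(S^1 \X S^2)$, these co-core spheres can be isotoped to meet $\Sigma$ in $k_1$ disjoint, mutually nonseparating simple closed curves, each bounding a disk in $H_1$ and a disk in $H_2$ (the two hemispheres of the sphere), giving the desired collection $C$.

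Next, view $X_2$ as a 4-dimensional cobordism from $H_2$ to $H_3$ rel $\Sigma$. Since $X_2 \cong \natural^{k_2}(S^1 \X D^3)$, Waldhausen's theorem implies that the genus-$g$ Heegaard splitting $(H_2, H_3)$ of $\pd X_2 \cong \#^{k_2}(S^1 \X S^2)$ is the $(g-k_2)$-fold stabilization of the minimal genus-$k_2$ splitting. This lets me build $X_2$ from $H_2 \X I$ by attaching $g-k_2$ 4-dimensional 2-handles along curves on $\Sigma$ that bound compressing disks for $H_3$ and are geometrically dual to $g-k_2$ compressing curves for $H_2$, together with further handles associated to the stabilization data. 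After gluing $X_2$ onto $X_1$ along $H_2$, these 2-handles become 2-handles of $X$ attached to $\pd X_1$ along framed curves (framed by $\Sigma$), producing the attaching link $L$. Finally, $X_3 \cong \natural^{k_3}(S^1 \X D^3)$ attaches along $H_3 \cup_\Sigma H_1 = \pd(X_1 \cup X_2)$, and dualizing its handle decomposition yields $k_3$ 3-handles and one 4-handle closing off $X$.

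The main obstacle will be verifying that the $g-k_2$ 2-handles, once attached to $\pd X_1$, transform it into $\#^{k_3}(S^1 \X S^2) \cong \pd X_3$, so that $X_3$'s dualized handles correctly cap things off. Equivalently, I need to confirm that the $k_2$ stabilization pairs in the Heegaard splitting $(H_2, H_3)$ translate into canceling 2/3-handle pairs in the relative handle decomposition of $X_2$ over $H_2 \X I$, so that the only handles contributing nontrivially to $X$ from $X_2$ are the stated $g-k_2$ 2-handles. This cancellation argument is essentially the content of Lemma 13 of~\cite{GK}, and once established the curve-level description of both the co-cores and the attaching link $L$ follows directly from the Heegaard-theoretic picture.
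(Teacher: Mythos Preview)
The paper does not actually supply a proof of this lemma: it is stated as ``a restatement of parts of Lemma 13 from~\cite{GK}'' and used as a black box. Your sketch is essentially the Gay--Kirby argument itself---build $X_1$ as a $0$-handle with $k_1$ $1$-handles, invoke Waldhausen's theorem on the Heegaard splitting $H_2 \cup_\Sigma H_3$ of $\partial X_2$ to extract the $g-k_2$ dual-curve $2$-handles, and cap off with the dualized handles of $X_3$---so at the level of strategy you are aligned with the source the paper cites.

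One small point of phrasing: calling $X_2$ ``a 4-dimensional cobordism from $H_2$ to $H_3$ rel $\Sigma$'' is not literally correct, since $\partial X_2 = H_2 \cup_\Sigma H_3$ is a closed $3$-manifold rather than a cobordism boundary; what you want is to build $X_2$ relative to a collar of $H_2$ inside it. You have correctly flagged the genuine content---that the $k_2$ common curves in the standard splitting contribute cancelling handle pairs so that only the $g-k_2$ two-handles survive---and correctly identified this as the substance of the cited Gay--Kirby lemma. Since the paper defers the argument entirely to~\cite{GK}, there is nothing further to compare.
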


The lemma can also be applied by any permutation of $\{1,2,3\}$ to the indices of the components $X_i$.  An example that carries out this procedure appears in Subsection 2.6 of~\cite{MZB}.

\section{The case $d = 4n+1$}\label{sec:4n+1}

We break the proof of the main theorem into two cases.  First, in this section we consider $c/d$ of the form $4/(4n+1)$.  In the next section, we examine $c/d$ of the form $4/(4n+3)$.  The proofs are quite similar but the specific curves and computations are different.  For the remainder of this section, we will label curves and arcs in the surface $\overline{F}$ at right in Figure~\ref{fig:4n+1} as follows:

\begin{enumerate}
\item The red arc is $\overline{a_1}$, and the pink arc is $\overline{a_2}$.
\item The dark blue arc is $\overline{b_1}$, and the light blue arc is $\overline{b_2}$.
\item The dark green curve is $\overline{V_{4/1}}$ and the light green curve is $\overline{V'_{4/1}}$.
\end{enumerate}

\begin{figure}[h!]
\begin{subfigure}{.48\textwidth}
  \centering
  \includegraphics[width=1\linewidth]{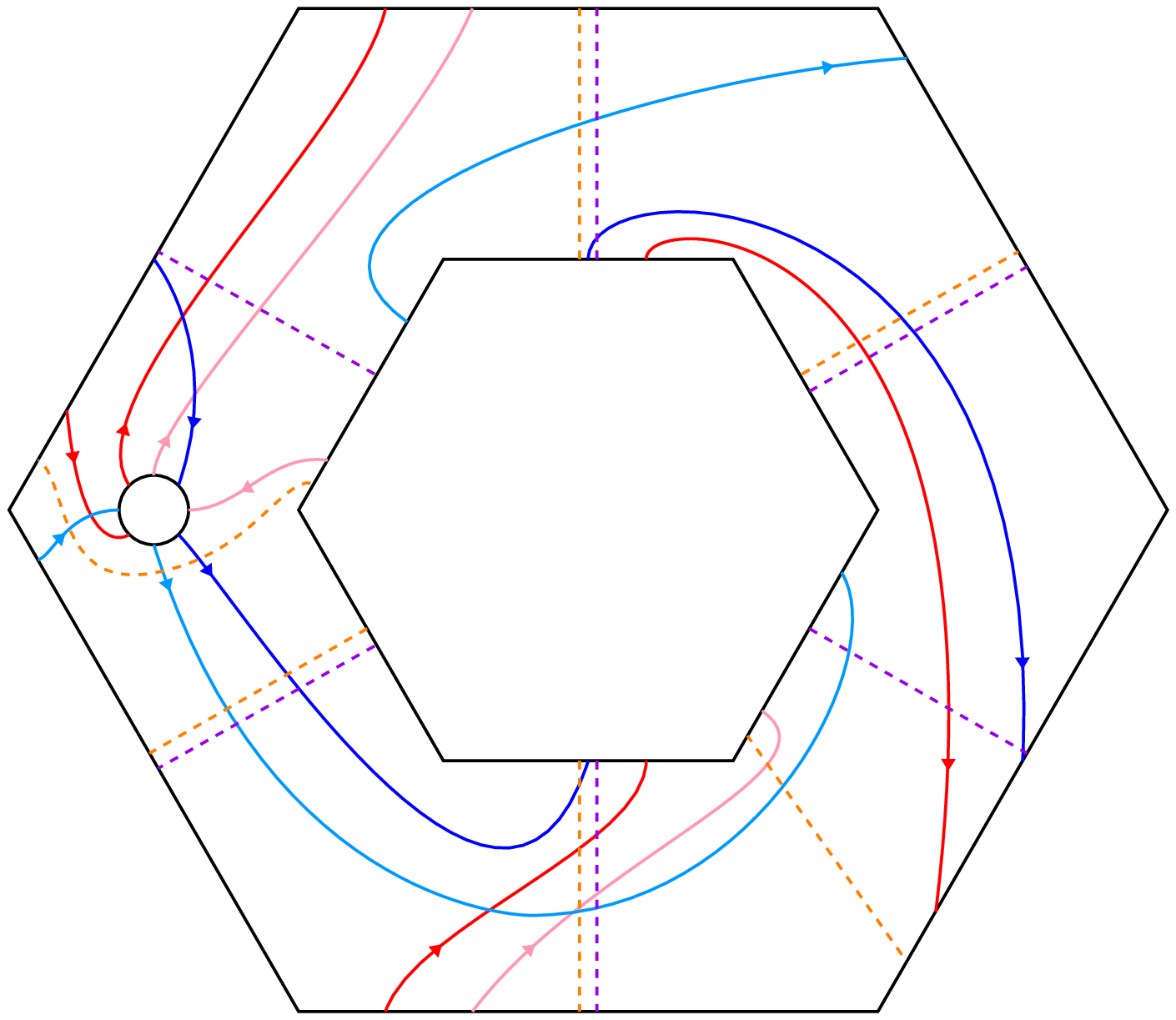}
  \label{fig:braid1}
  \caption{$F$}
\end{subfigure}
\begin{subfigure}{.48\textwidth}
  \centering
  \includegraphics[width=1\linewidth]{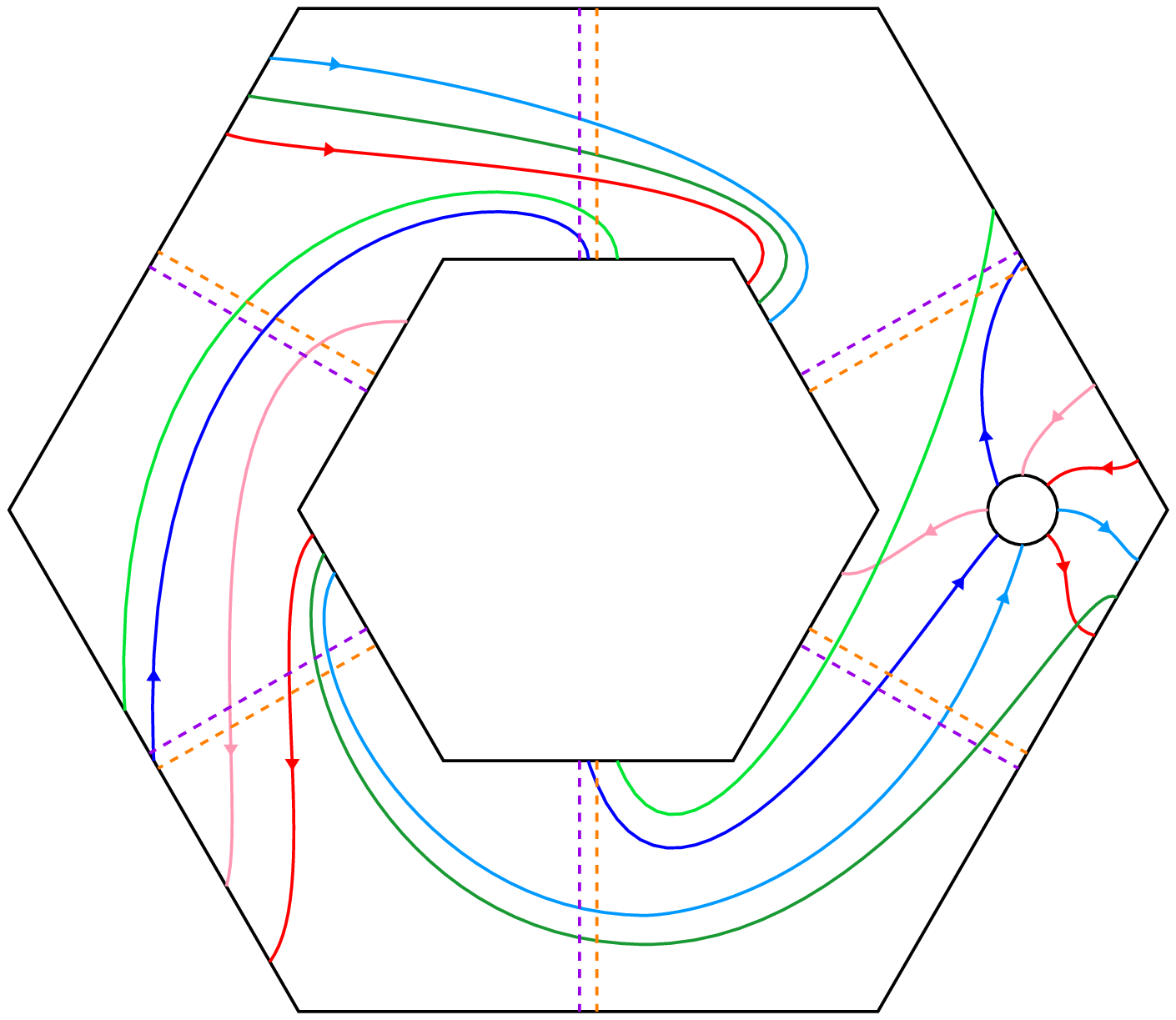}
  \label{fig:braid2}
  \caption{$\ov{F}$}
\end{subfigure}
	\caption{Curves and arcs in $\Sigma = F \# \ov{F}$ used to compute $P(3,2;4/(4n+1))$}
\label{fig:4n+1}
\end{figure}

In addition, we let $a_3$ and $a_4$ denote the red and pink arcs, respectively, at left in Figure~\ref{fig:4n+1}, noting that $a_3 = \varphi(a_1)$, $a_4 = \varphi(a_2)$, and the dark blue and light blue arcs in $F$ are the arcs $b_1$ and $b_2$.  We also let $\tau:F \rightarrow F$ to be the product of right-handed Dehn twists about the pairwise disjoint collection of curves $V_{0/1} \cup V'_{0/1} \cup V''_{0/1}$, so that $\ov{\tau}:\ov{F} \rightarrow \ov{F}$ is the product of left-handed Dehn twists about $\overline{V_{0/1}} \cup \overline{V'_{0/1}} \cup \overline{V''_{0/1}}$, shown as dotted purple curves in Figure~\ref{fig:4n+1}.  Define $V^*_{0/1}$ be the curve obtained by sliding $V_{0/1}$ over $Q$ (equivalently, $V^*_{0/1}$ is the image of $V''_{0/1}$ under $\varphi$ as in Figure~\ref{fig:2}), and let $\tau_{*}: F \rightarrow F$ be identical to $\tau$ except for replacing the Dehn twist about $V_{0/1}$ with a Dehn twist about $V^*_{0/1}$, shown in dotted orange in the same figure.  Since $\varphi(V_{0/1} \cup V'_{0/1} \cup V''_{0/1}) = V'_{0/1} \cup V''_{0/1} \cup V^*_{0/1}$, it follows from Section 3.5 of~\cite{primer} that
\begin{equation}\label{eqn:comm}
\varphi \circ \tau = \tau_* \circ \varphi.
\end{equation}

In Lemmas 4.7 and 4.8 of~\cite{FHRkS}, it was shown that

\begin{lemma}\label{twist}
For the curves $V_{4/d}$ and $V'_{4/d}$ in $\Sigma$, we have
\[ \tau^n(V_{4/d}) = V_{4/(d+4)} \quad \text{ and } \quad  \tau^n(V'_{4/d}) = V'_{4/(d+4)}.\]
\end{lemma}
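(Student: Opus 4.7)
The plan is to translate the statement from the fiber $F$ (equivalently $\widehat F$) down to the four-punctured sphere $S$ via the branched covering $\rho: \widehat F \to S$, where essential simple closed curves are parametrized by slopes in $\overline{\Q}$ and Dehn twists act on these slopes by explicit M\"obius transformations, and then lift the result back. Since the assertion is multiplicative in $n$, it suffices to prove the base case $\tau(V_{4/d}) = V_{4/(d+4)}$ (together with its primed analogue) and then iterate.

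For the descent, the three pairwise disjoint curves $V_{0/1}$, $V'_{0/1}$, $V''_{0/1}$ form precisely the preimage $\rho^{-1}(\lambda_{0/1})$, and they are cyclically permuted by $\widehat\varphi$. It follows that the product $\tau$ of right-handed Dehn twists about these three curves is the canonical lift of a single right-handed Dehn twist $\widetilde\tau$ about $\lambda_{0/1}$ on $S$, and the commutation relation $\varphi \circ \tau = \tau_* \circ \varphi$ from Equation~\eqref{eqn:comm} confirms compatibility with the rotational symmetry. On the four-punctured sphere, $\widetilde\tau$ acts on isotopy classes of essential simple closed curves via a specific element of $PSL_2(\Z)$; the standard calculation (using the formula that a Dehn twist about the slope-$0$ curve sends $p/q \mapsto p/(q+2p)$, after accounting for the doubling $c = 2 \cdot (c/2)$ inherent in the labeling convention of~\cite{FHRkS}) yields $\widetilde\tau(\lambda_{(c/2)/d}) = \lambda_{(c/2)/(d+c)}$, which for $c = 4$ specializes to $\widetilde\tau(\lambda_{2/d}) = \lambda_{2/(d+4)}$.

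Lifting back, the image $\tau(V_{4/d})$ projects under $\rho$ to $\lambda_{2/(d+4)}$ and must therefore coincide with one of the three lifts $V_{4/(d+4)}$, $V'_{4/(d+4)}$, or $V''_{4/(d+4)}$. The main obstacle is isolating which lift is correct, and this is where the argument requires the most delicacy. I would settle it by choosing the labelings $V_{c/d}, V'_{c/d}, V''_{c/d}$ to depend continuously on $c/d$ within a single coherent family (as in the conventions of~\cite{FHRkS}); since $\tau$ is the full lift of a mapping class on $S$, it must carry the unprimed family to itself. Alternatively, one can use the interplay between $\tau$ and $\widehat\varphi$, which cyclically permutes the three families $V_\bullet$, $V'_\bullet$, $V''_\bullet$, combined with Equation~\eqref{eqn:comm}, to pin down the unprimed lift after an explicit verification in a base case such as $d = 1$. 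The same reasoning produces $\tau(V'_{4/d}) = V'_{4/(d+4)}$, and induction on $n$ finishes both formulas.
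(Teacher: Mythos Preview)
The paper does not prove this lemma; the sentence introducing it attributes the result to Lemmas~4.7 and~4.8 of~\cite{FHRkS}, and no argument is given here beyond that citation. Your proposal---descend via the branched cover $\rho:\widehat F\to S$, compute the action of the induced twist on slopes in the four-punctured sphere, and lift back---is the natural strategy and is almost certainly what~\cite{FHRkS} does (the paper's aside that $\tau^{-2}=\tau_0$, where $\tau_0$ is the multi-twist from~\cite{FHRkS}, points directly at that computation). So there is no competing approach in this paper to compare against; you have reconstructed the expected argument.

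Two minor remarks on your write-up. First, the lemma as printed says $\tau^n(V_{4/d})=V_{4/(d+4)}$, but its uses in Lemmas~\ref{lem:4n+1} and~\ref{lem:4n+3} (e.g.\ $\tau^n(V_{4/1})=V_{4/(4n+1)}$) make clear the intended conclusion is $\tau^n(V_{4/d})=V_{4/(d+4n)}$; you correctly prove the base case $\tau(V_{4/d})=V_{4/(d+4)}$ and iterate, which is the right reading. Second, the step you flag as delicate---pinning down which of the three lifts is the unprimed one---can be settled cleanly by your second suggestion: since $\widehat\varphi$ cyclically permutes $V_{0/1},V'_{0/1},V''_{0/1}$, it commutes with $\tau$ on $\widehat F$, so $\tau$ must respect the three $\widehat\varphi$-orbits among the lifts of $\lambda_{2/d}$. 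Your sketch is sound.
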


Using the symmetry of $\Sigma$, it follows that
\[ \ov{\tau}^n(\ov{V_{4/d}}) =\ov{V_{4/(d+4)}}\quad \text{ and } \quad  \ov{\tau}^n(\ov{V'_{4/d}}) = \ov{V'_{4/(d+4)}}\]
as well.  This relationship is precisely why we need to address two cases, when $d= 4n+1$ and when $d = 4n+3$.  The observant reader should note that $\tau$ has the property that $\tau^{-2} = \tau_0$, where $\tau_0$ is the left-handed multi-twist along three pairs of parallel curves defined in~\cite{FHRkS}.

\begin{lemma}\label{lem:4n+1}
The trisection $\T(4/(4n+1))$ gives rise to an inverted handle decomposition of $X_{L(3,2;4/(4n+1))}$ with two 1-handles and two 2-handles, where co-cores of the 1-handles meet the central surface $\Sigma$ in the curves $\tau^n(b_1) \cup  \ov{\tau}^n(\overline{b_1})$ and $\tau^n(b_2) \cup  \ov{\tau}^n(\overline{b_2})$, while an attaching link for the 2-handles is determined by $\tau_*^n(a_3) \cup \ov{\tau}^n(\overline{a_1})$ and $\tau_*^n(a_4) \cup \ov{\tau}^n(\overline{a_2})$.  The corresponding group presentation $P(3,2;4/(4n+1))$ is
\[ P(3,2;4/(4n+1)) = \langle x,y \,|\,  \overline{x}(y\overline{x})^n \overline{y}(\overline{y}x)^n y, \overline{x}(y\overline{x})^n \overline{y}(x\overline{y})^n \rangle.\]
\end{lemma}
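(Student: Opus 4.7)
The plan is to apply Lemma~\ref{handle} to the $(4;0,2,2)$-trisection $\T(4/(4n+1))$ of Proposition~\ref{diagram} with the cyclic relabeling that casts $X_3$ (rank $2$) as the $1$-handlebody.  This delivers one $0$-handle, $k_3 = 2$ one-handles coming from $X_3$, $g-k_2 = 2$ two-handles coming from $X_2$, no $3$-handles (since $k_1=0$), and one $4$-handle, establishing the handle-theoretic half of the statement.  Under this relabeling, the $1$-handle co-cores meet $\Sigma$ in curves bounding disks in both $H_\A$ and $H_\g$, while the $2$-handle attaching curves bound disks in $H_\n$ and are dual to curves bounding disks in $H_\g$.

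To identify the specific curves in the statement, I would reduce to the base case $c/d = 4/1$.  Let $\Psi = \tau^n \cup \ov{\tau}^n$, extended by the identity over the connect-sum tube, be viewed as a self-homeomorphism of $\Sigma$.  By Lemma~\ref{twist}, $\Psi$ carries $\g_{4/1}$ onto $\g$, so $\Psi(H_{\g_{4/1}}) = H_\g$.  A direct inspection of Figure~\ref{fig:4n+1} shows that each of the curves $V_{0/1}$, $V'_{0/1}$, $V''_{0/1}$ and their mirror images bounds a disk in both $H_\A$ and $H_\n$, so the Dehn twists comprising $\tau$ and $\ov{\tau}$ extend over $H_\A$ and $H_\n$, and hence $\Psi$ preserves each of these handlebodies.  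It therefore suffices to verify the base case $n=0$ from Figure~\ref{fig:4n+1}: that $b_i\cup\ov{b_i}$ bounds disks in both $H_\A$ and $H_{\g_{4/1}}$, and that $\varphi(a_1)\cup\ov{a_1}$ and $\varphi(a_2)\cup\ov{a_2}$ bound disks in $H_\n$ dual to those $H_{\g_{4/1}}$-disks.  Applying $\Psi$ then propagates each of these properties to general $n$.

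The asymmetric appearance of $\tau_*^n$ on the $F$-side of the attaching curves is forced by equation~\eqref{eqn:comm}.  Iterating $\varphi\tau = \tau_*\varphi$ yields $\tau_*^n\varphi = \varphi\tau^n$, so $\tau_*^n(\varphi(a_i)) \cup \ov{\tau}^n(\ov{a_i}) = \varphi(\tau^n(a_i)) \cup \ov{\tau}^n(\ov{a_i})$ is precisely the $\Psi$-image of the base-case attaching curve $\varphi(a_i)\cup\ov{a_i}$, with the $F$-side twist pushed through $\varphi$.  Combined with the $\Psi$-invariance of $H_\n$ from the previous paragraph, this yields the claimed attaching curves (with $i+2 \in \{3,4\}$) and their disk-bounding properties for every $n$.

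Finally, the presentation is obtained by orienting each attaching curve and traversing it, recording a letter $x^{\pm 1}$ or $y^{\pm 1}$ for each signed intersection with a $1$-handle co-core disk.  Each iterate of $\tau$ (respectively $\ov{\tau}$) introduces one full wrap of the attaching curve past a pair of co-core disks, contributing a single two-letter block — $y\ov{x}$, $\ov{y}x$, or $x\ov{y}$ depending on side and handedness — so that $n$ iterates yield the $n$-th power patterns appearing in both relators, while the initial $\ov{x}$, the interior $\ov{y}$, and the final $y$ (or its absence) record the intersections of the unmodified base-case arcs with the co-core disks.  The main obstacle I anticipate is the base-case picture verification — particularly establishing the duality between the $H_\n$-disks and $H_{\g_{4/1}}$-disks in Figure~\ref{fig:4n+1}, which may require several non-obvious handle slides of the cut systems before the disks become visible — rather than the final word-reading step, which is then a mechanical intersection count once the base-case picture is in hand.
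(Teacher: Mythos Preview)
Your relabeling is the transposition $(1\ 3)$, not the cyclic shift needed here, and this swaps the roles of $H_\A$ and $H_\n$ relative to the statement.  The curves $\tau^n(b_i)\cup\ov{\tau}^n(\ov{b_i})$ named in the lemma are $\n$-curves: they bound disks in $H_\n$ and (being disjoint from $\g$) in $H_\g$, but there is no reason for them to bound in $H_\A$.  Likewise $\tau_*^n(a_{i+2})\cup\ov{\tau}^n(\ov{a_i}) = \varphi(\tau^n(a_i))\cup\ov{\tau^n(a_i)}$ are $\A$-curves and do \emph{not} bound in $H_\n$: in $H_\n\cong F\times I$ the curve $\varphi(a_i)\cup\ov{a_i}$ represents $[\varphi(a_i)][a_i]^{-1}$ in $\pi_1(F)$, which is nontrivial since $\varphi$ is nontrivial.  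So the base-case verification you propose in paragraph~2 (``$b_i\cup\ov{b_i}$ bounds in $H_\A$'' and ``$\varphi(a_i)\cup\ov{a_i}$ bounds in $H_\n$'') would fail.  The correct relabeling, as in the paper, puts the $1$-handles in $X_2$: co-cores bound in $H_\n\cap H_\g$, and attaching curves bound in $H_\A$ dual to $\g$.

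Your invariance claim also breaks.  The curves $V_{0/1},V'_{0/1},V''_{0/1}$ are essential in $F$, hence bound no disks in either $H_\n=F\times I$ or in $H_\A$; the reason $\Psi=\tau\cup\ov{\tau}$ extends over $H_\n$ is that each $V$ cobounds an \emph{annulus} $V\times I$ with its mirror, not a disk.  Over $H_\A$, however, the curve in $F$ isotopic (through $H_\A$) to $\ov{c}\subset\ov F$ is $\varphi(c)$, so the product of annulus twists extending over $H_\A$ acts on the $F$-side by twisting along $\varphi(V_{0/1}\cup V'_{0/1}\cup V''_{0/1}) = V'_{0/1}\cup V''_{0/1}\cup V^*_{0/1}$; that is, $\tau_*\cup\ov\tau$ extends over $H_\A$, not $\Psi$.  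Accordingly, the attaching curves in the statement are $(\tau_*\cup\ov\tau)^n$-images of $\varphi(a_i)\cup\ov{a_i}$, not $\Psi$-images; your paragraph~3 identifies $\tau_*^n(\varphi(a_i))$ with $\Psi$ applied to $\varphi(a_i)$ on the $F$-side, but $\tau^n\varphi(a_i)\neq\varphi\tau^n(a_i)$ in general.  The paper sidesteps all of this by choosing the cut arcs in Proposition~\ref{diagram} to be $\tau^n(a_1),\tau^n(a_2),\tau^n(b_1),\tau^n(b_2)$ from the outset, so that the required $\n$- and $\A$-curves are produced tautologically and only the duality with $\g$ needs to be read off Figure~\ref{fig:4n+1}.
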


\begin{proof}
By Lemma~\ref{diagram}, there exists a trisection diagram $(\A,\n,\g)$ for $\T(4n/(4n+1))$ such that $\A$ contains for $i=1,2$ the curves $\A_i = \varphi(\tau^n(a_i)) \cup \ov{\tau^n(a_i)} = \varphi(\tau^n(a_i)) \cup \ov{\tau}^n(\ov{a_i})$, which bound disks in $H_1$.  Applying Equation~\ref{eqn:comm} repeatedly, we have
\[ \tau_*^n(a_3) \cup \ov{\tau}^n(\overline{a_1}) = \tau_*^n(\varphi(a_1)) \cup \ov{\tau}^n(\overline{a_1}) = \varphi(\tau^n(a_1)) \cup \ov{\tau}^n(\overline{a_1}) = \A_1;\]
\[ \tau_*^n(a_4) \cup \ov{\tau}^n(\overline{a_2}) = \tau_*^n(\varphi(a_2)) \cup \ov{\tau}^n(\overline{a_2}) = \varphi(\tau^n(a_2)) \cup \ov{\tau}^n(\overline{a_2}) = \A_2.\]
Additionally, for $i=1,2$, the curves $\n_i = \tau^n(b_i) \cup \ov{\tau^n(b_i)} =  \tau^n(b_i)  \cup  \ov{\tau}^n(\overline{b_i})$ are curves in $\n$, which bound in $H_2$, and $\n_1$ and $\n_2$ are disjoint from the curves in $\g = \{\g_1,\g_2,\g_3,\g_3\}$, the curves
\[ \{V_{4/(4n+1)},V'_{4/(4n+1)},\overline{V_{4/(4n+1)}},\overline{V_{4/(4n+1)}}\} = \{ \tau^n(V_{4/1}),\tau^n(V'_{4/1}), \ov{\tau}^n(\overline{V_{4/1}}),\ov{\tau}^n(\overline{V'_{4/1}})\},\]
as asserted by Lemma~\ref{twist}.  Thus, the curves $\n_1$ and $\n_2$ bound disks in the handlebody $H_3$ as well.

Now, observe that $\A_1$ meets $\g_1$ once and avoids $\g_2$, while $\A_2$ meets $\g_2$ once and avoids $\g_1$.  By Lemma 14 of~\cite{GK}, the handle decomposition of $X_{L(3,2;4/(4n+1))}$ determined by $L(3,2;4/(4n+1))$ is compatible with the handle decomposition determined by $\T(4/(4n+1))$, with no 1-handles (contained in the 4-ball $X_1)$, two 2-handles (contained in $X_3$), and two 3-handles (contained in $X_2$).  It follows that in the inverted handle decomposition, the two 1-handles are in $X_2$, while the two 2-handles can be viewed in $X_3$.  Since $\pd X_2 = H_2 \cup H_3$, determined by $\n$ and $\g$, the curves $\n_1$ and $\n_2$ above can be chosen as the intersections of co-cores of the 1-handles with $\Sigma$.  Moreover, the curves $\A_1$ and $\A_2$ are dual to $\g_1$ and $\g_2$, so that an attaching link for the 2-handles is $\A_1 \cup \A_2$ by Lemma~\ref{handle}.

Finally, using Figure~\ref{fig:4n+1} with orientation as shown, we can read off the relators determined by $\A_1$ and $\A_2$.  To compute oriented intersections, we find the sign of each point of $\A_i \cap \n_j$, where the direction of $\A_i$ is the first element of a standardly oriented ordered basis.  Note that the multi-twists $\tau$ and $\tau_*$ differ only at the curves $V_{0/1}$ (which twists $\n_i$ but not $\A_i$) and $V^*_{0/1}$ (which twists $\A_i$ but not $\n_i$).  We have included an illustration of the twisting in a neighborhood of these curves in Figure~\ref{fig:twist1} to aid in our computation.

We use the generators $x$ and $y$ for $\n_1$ and $\n_2$, respectively, and we read the relators $r$ and $s$ from $\A_1$ and $\A_2$, respectively, starting in $F$ at $Q$ and following the orientations (note that there are no intersections to be seen in $\overline{F}$).  We have
\begin{eqnarray*}
r &=& \overline{x}(y\overline{x})^n \overline{y}(\overline{y}x)^n y; \\
s &=& \overline{x}(y\overline{x})^n \overline{y}(xy\overline{y}\overline{y})^n = \overline{x}(y\overline{x})^n \overline{y}(x\overline{y})^n
\end{eqnarray*}
\end{proof}

Next, we trivialize our computed presentation.

\begin{figure}[h!]
\begin{subfigure}{.48\textwidth}
  \centering
  \includegraphics[width=.25\linewidth]{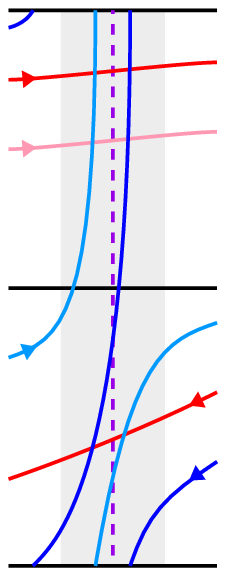}
  \label{fig:braid1}
  \caption{Dehn twisting $b_1$ and $b_2$ about $V_{0/1}$}
\end{subfigure}
\begin{subfigure}{.48\textwidth}
  \centering
  \includegraphics[width=.25\linewidth]{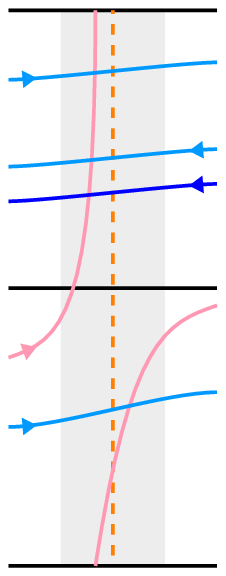}
  \label{fig:braid2}
  \caption{Dehn twisting $a_3$ and $a_4$ about $V^*_{0/1}$}
\end{subfigure}
	\caption{Images of Dehn twists to aid in computations for Lemma~\ref{lem:4n+1}.}
\label{fig:twist1}
\end{figure}

\begin{proposition}\label{prop:4n+1}
The presentation $P(3,2;4/(4n+1))$ is AC-trivial.
\end{proposition}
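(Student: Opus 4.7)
The plan is to AC-trivialize $\langle x, y \mid r, s \rangle$, where $r = \overline{x}(y\overline{x})^n\overline{y}(\overline{y}x)^n y$ and $s = \overline{x}(y\overline{x})^n\overline{y}(x\overline{y})^n$, by first rewriting $r$ in a form that exhibits obvious cancellation with $s$. Writing $A = (y\overline{x})^n$ and $B = (\overline{y}x)^n$, the first step is to verify the identity $B = \overline{y}A^{-1}y$ (an easy telescoping, using $A^{-1} = (x\overline{y})^n$). This gives $\overline{y}By = \overline{y}^2 A^{-1} y^2$, so
\[ r = \overline{x}\cdot A\overline{y}^2 A^{-1}\cdot y^2 = \overline{x}\cdot (A\overline{y}A^{-1})^2 \cdot y^2 = s\cdot x\cdot s\cdot y^2 \]
in the free group, using the relation $A\overline{y}A^{-1} = xs$ derived from $s = \overline{x}\cdot A\overline{y}A^{-1}$.

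From here a short sequence of AC-moves reduces $r$ to $xy^2$. First I apply moves (1)--(3) to replace $r$ with $s^{-1}r = xsy^2$, using the standard fact that left-multiplication of one relator by another is achievable via right-multiplication composed with conjugation (invert $s$, right-multiply, conjugate by the word spelling $r^{-1}$, then invert $s$ back). Then I conjugate $s$ by $y^{-2}$ via iterated move (3), right-multiply the updated $r$ by the inverse of this conjugate to produce $xsy^2\cdot y^{-2}s^{-1}y^2 = xy^2$, and conjugate $s$ back to its original form. The presentation now reads $\langle x,y \mid xy^2, s\rangle$.

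Next I apply the free-group automorphism $\psi : x\mapsto xy^{-2},\ y\mapsto y$ as a move (5), justified by Corollary~\ref{cor:auto}. This sends $xy^2$ to $x$ and sends $s$ to $\psi(s) = y^2\overline{x}(y^3\overline{x})^n\overline{y}(xy^{-3})^n$. Setting $x = 1$ in $\psi(s)$ produces $y^2\cdot y^{3n}\cdot\overline{y}\cdot y^{-3n} = y$, so with the new first relator being simply $x$, I can use AC-moves to delete each $x^{\pm 1}$ letter of $\psi(s)$ in turn: for any factorization $\psi(s) = u\cdot x^{\pm 1}\cdot v$, right-multiplication by the conjugate $v^{-1}x^{\mp 1}v$ (a conjugate of the first relator to the appropriate power) yields $u\cdot v$, cleanly removing the letter. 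After all such deletions, $\psi(s)$ reduces freely to the word $y$, leaving the trivial presentation $\langle x,y \mid x,y\rangle$.

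The main obstacle is spotting the identity $B = \overline{y}A^{-1}y$, which is what lets $r$ be expressed as $sxsy^2$ and thereby sets up all of the subsequent cancellations; without it the relators appear unrelated. Once this identity is in hand, the remaining reductions are essentially mechanical, though some care is required in bookkeeping the conjugations needed to realize each left-multiplication and in confirming that the elimination of $x$-letters from $\psi(s)$ does indeed leave $y$ as the freely reduced result.
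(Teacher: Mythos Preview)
Your argument is correct. You and the paper both reduce the relator pair to one containing $xy^2$ (equivalently $\overline{y}^2\overline{x}$), after which the finish is routine; the paths to that relator differ.

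The paper proceeds by direct manipulation: invert $r$, right-multiply by $s$ to obtain $\overline{y}(\overline{x}y)^n(x\overline{y})^n$, cyclically permute $s$ to $(y\overline{x})^n(\overline{y}x)^n\overline{y}\,\overline{x}$, and multiply once more; the two middle blocks cancel in pairs leaving $\overline{y}^2\overline{x}$. This uses only moves (1)--(3). Your approach instead isolates the free-group identity $r = sxsy^2$ up front (via $B=\overline{y}A^{-1}y$ and $s=\overline{x}A\overline{y}A^{-1}$), which explains \emph{why} the cancellation occurs and makes the reduction to $xy^2$ transparent. The trade-off is that you then invoke an automorphism (move~(5)) and Corollary~\ref{cor:auto} to finish, whereas the paper stays within moves (1)--(3) and handles the substitution $x=\overline{y}^2$ directly. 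Both endings are standard; your structural identity is a nice addition, while the paper's version is slightly more self-contained.

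One small note: your parenthetical recipe for realizing $r\mapsto s^{-1}r$ via moves (1)--(3) is garbled, though the fact itself is standard. The cleanest justification is $r\to r^{-1}\to r^{-1}s\to (r^{-1}s)^{-1}=s^{-1}r$, using (1), (2), (1).
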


\begin{proof}
Let $(r_0,s_0) = (r,s)$ be the relators from Lemma~\ref{lem:4n+1}.  First, let $r_1 = r_0^{-1}$ so that
\[ r_1 = \ov{y}(\ov{x}y)^ny(x\ov{y})^nx.\]
Next, let $r_2 = r_1s_0$, yielding
\[ r_2 = (\ov{y}(\ov{x}y)^ny(x\ov{y})^nx)( \overline{x}(y\overline{x})^n \overline{y}(x\overline{y})^n) = \ov{y}(\ov{x}y)^n(x\ov{y})^n.\]
Regroup the terms in $s_0$ to get
\[ s_0 = \overline{x}(y\overline{x})^n \overline{y}(x\overline{y})^n = \ov{x}(y\ov{x})^n(\ov {y} x)^n \ov{y}.\]
Now, cyclically permute $s_0$ to get 
\[ s_1 = (y \ov{x})^n(\ov{y}x)^n \ov{y}\ov{x},\]
and then let $r_3 = r_2s_1$.  Thus,
\[ r_3 = (\ov{y}(\ov{x}y)^n(x\ov{y})^n)(y \ov{x})^n((\ov{y}x)^n \ov{y}\ov{x}) = \ov{y}^2\ov{x}.\]
At this point, $r_3$ is the relator $x = \ov{y}^2$, and we can use cyclic permutations and substitutions to convert $s_1$ to $s_2 = y$, which in turn lets us convert $r_3$ to $r_4 = x$, completing the proof.
\end{proof}

\section{The case $d = 4n+3$}\label{sec:4n+3}

We proceed in a manner nearly identical to that of Section~\ref{sec:4n+1}, but starting with different curves and arcs in $\Sigma$.  For the remainder of this section, we will label curves and arcs in the surface $\overline{F}$ at right in Figure~\ref{fig:4n+3} as follows:

\begin{enumerate}
\item The red arc is $\overline{a_1}$, and the pink arc is $\overline{a_2}$.
\item The dark blue arc is $\overline{b_1}$, and the light blue arc is $\overline{b_2}$.
\item The dark green curve is $\overline{V_{4/3}}$ and the light green curve is $\overline{V'_{4/3}}$.
\end{enumerate}

\begin{figure}[h!]
\begin{subfigure}{.48\textwidth}
  \centering
  \includegraphics[width=1\linewidth]{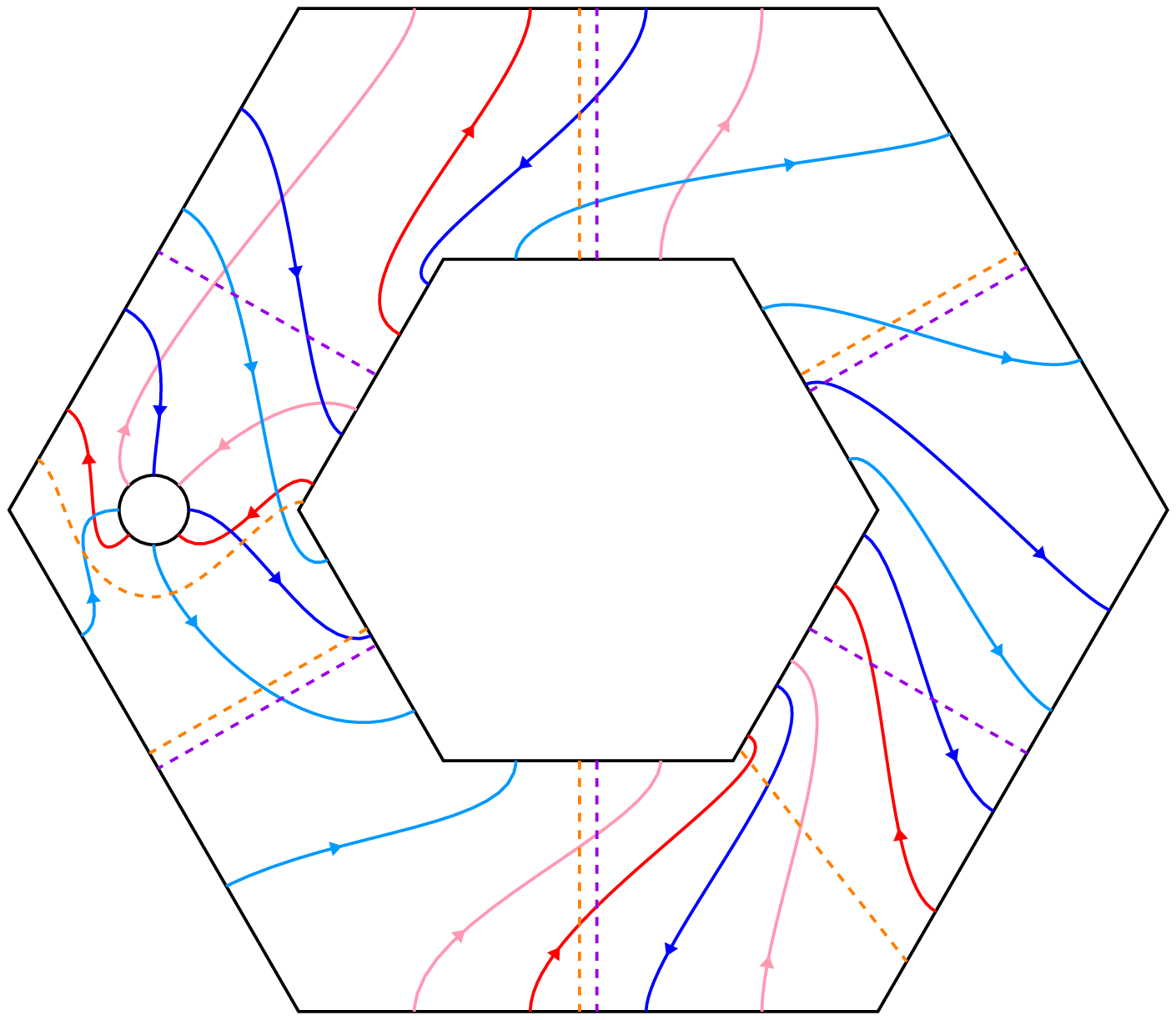}
  \label{fig:braid1}
  \caption{$F$}
\end{subfigure}
\begin{subfigure}{.48\textwidth}
  \centering
  \includegraphics[width=1\linewidth]{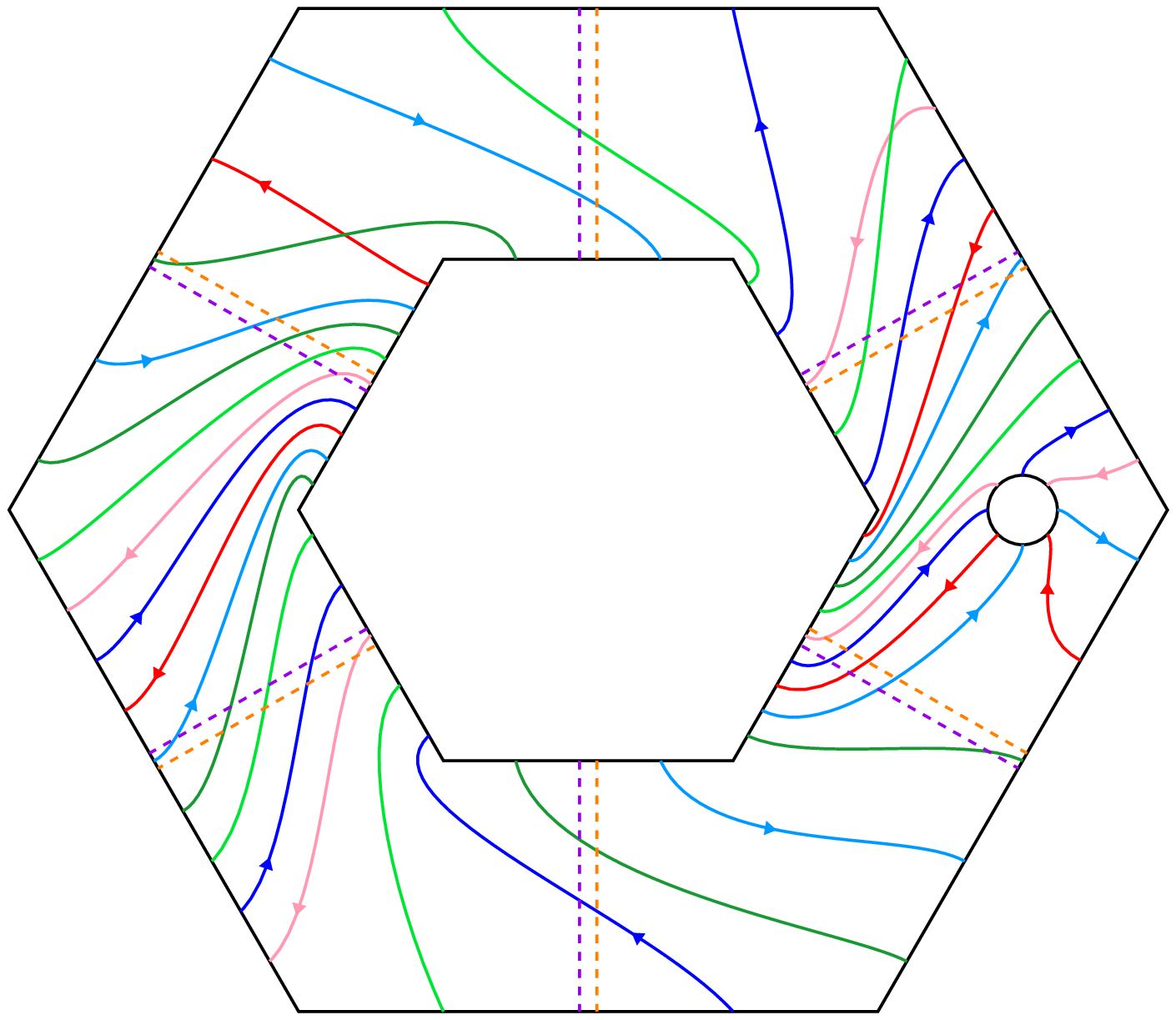}
  \label{fig:braid2}
  \caption{$\ov{F}$}
\end{subfigure}
	\caption{Curves and arcs in $\Sigma = F \# \ov{F}$ used to compute $P(3,2;4/(4n+3))$}
\label{fig:4n+3}
\end{figure}

In addition, we let $a_3$ and $a_4$ denote the red and pink arcs at left in Figure~\ref{fig:4n+3}; as before, $a_3 = \varphi(a_1)$, $a_4 = \varphi(a_2)$, and the dark blue and light blue arcs in $F$ are the arcs $b_1$ and $b_2$.  Recall the definitions of $\tau$ and $\tau_*$ from Section~\ref{sec:4n+1}.

\begin{lemma}\label{lem:4n+3}
The trisection $\T(4/(4n+3))$ gives rise to an inverted handle decomposition of $X_{L(3,2;4/(4n+3))}$ with two 1-handles and two 2-handles, where co-cores of the 1-handles meet the central surface $\Sigma$ in the curves $\tau^n(b_1) \cup  \ov{\tau}^n(\overline{b_1})$ and $\tau^n(b_2) \cup  \ov{\tau}^n(\overline{b_2})$, while an attaching link for the 2-handles is determined by $\tau_*^n(a_3) \cup \ov{\tau}^n(\overline{a_1})$ and $\tau_*^n(a_4) \cup \ov{\tau}^n(\overline{a_2})$.  The corresponding group presentation $P(3,2;4/(4n+3))$ is
\[ P(3,2;4/(4n+3)) = \langle x,y \,|\, \ov{y}(\ov{x} \ov{y} \ov{x})^n (yx^2)^n yx, \, \ov{x}(\ov{y}\ov{x}^2)^n \ov{y} \ov{x}\ov{y}(xyx)^nxy \rangle\]
\end{lemma}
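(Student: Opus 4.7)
The plan is to follow the template of Lemma~\ref{lem:4n+1} almost verbatim, with the only substantive change being that the base case is $d = 3$ rather than $d = 1$ and the relevant curves live in Figure~\ref{fig:4n+3} instead of Figure~\ref{fig:4n+1}. First, I would apply Proposition~\ref{diagram} at $c/d = 4/3$ to obtain a $(4;0,2,2)$-trisection diagram $(\A,\n,\g)$ for $\T(4/3)$, and then push the diagram forward by $\tau^n$ on the $F$ side and by $\ov{\tau}^n$ on the $\ov{F}$ side. Lemma~\ref{twist} identifies the images of $V_{4/3}$ and $V'_{4/3}$ under $\tau^n$ as $V_{4/(4n+3)}$ and $V'_{4/(4n+3)}$, exhibiting the resulting triple as a trisection diagram for $\T(4/(4n+3))$. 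Equation~\ref{eqn:comm} then rewrites each $\varphi(\tau^n(a_i))$ as $\tau_*^n(\varphi(a_i)) = \tau_*^n(a_{i+2})$ for $i = 1,2$, giving the asserted form of the $\A$-curves $\A_i = \tau_*^n(a_{i+2}) \cup \ov{\tau}^n(\ov{a_i})$.

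Next, I would verify the handlebody conditions needed to extract the inverted handle decomposition. The curves $\n_i = \tau^n(b_i) \cup \ov{\tau}^n(\ov{b_i})$ bound disks in $H_2$ by construction, and they bound disks in $H_3$ provided they are disjoint from the $\g$-curves; I would confirm this disjointness in the base case from Figure~\ref{fig:4n+3} and extend to general $n$ by $\tau^n$-equivariance. One then checks geometrically that $\A_i$ meets $\g_i$ once transversely and is disjoint from $\g_{3-i}$ for $i = 1,2$, so Lemma 14 of~\cite{GK} produces a handle decomposition of $X_{L(3,2;4/(4n+3))}$ with no 1-handles, two 2-handles (contained in $X_3$), and two 3-handles (contained in $X_2$). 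Inverting and invoking Lemma~\ref{handle} identifies the $\n_i$ as intersections of co-cores of the 1-handles with $\Sigma$ and the $\A_i$ as an attaching link for the 2-handles.

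Finally, to extract the two relators I would trace $\A_1$ and $\A_2$ through Figure~\ref{fig:4n+3}, starting each arc at $Q$, using $x$ for $\n_1$ and $y$ for $\n_2$, and recording signed intersections in order. Since $\tau$ and $\tau_*$ differ only at $V_{0/1}$ (which twists $\n_i$ but not $\A_i$) and $V^*_{0/1}$ (which twists $\A_i$ but not $\n_i$), the local pictures at each twisting annulus are identical to those drawn in Figure~\ref{fig:twist1}, and each $n$-fold application of $\tau$ or $\tau_*$ merely inserts an $n$-fold repeated subword into the base relator. The principal obstacle is the bookkeeping in this last step: because the base arcs $a_3, a_4, \ov{a_1}, \ov{a_2}$ meet $b_1, b_2, \ov{b_1}, \ov{b_2}$ in more points in the $d = 3$ configuration than in the $d = 1$ configuration, the positions of these intersections relative to the twisting annuli must be tracked precisely so that the repeating blocks $(\ov{x}\ov{y}\ov{x})^n$, $(yx^2)^n$, $(\ov{y}\ov{x}^2)^n$, and $(xyx)^n$ land at the correct locations in the concatenated words and produce exactly the relators stated in the lemma.
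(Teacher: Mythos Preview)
Your proposal is correct and mirrors the paper's proof almost exactly: the same trisection-diagram argument via Proposition~\ref{diagram}, the same use of Equation~\eqref{eqn:comm} to rewrite $\varphi(\tau^n(a_i))$ as $\tau_*^n(a_{i+2})$, the same duality check between $\A_i$ and $\g_i$, and the same method of tracing relators from the figure. The only slight inaccuracy is that the local twist pictures are not literally those of Figure~\ref{fig:twist1}---the paper supplies a separate Figure~\ref{fig:twist2} because the arcs $a_i,b_i$ have been redefined in Section~\ref{sec:4n+3}---but you already acknowledge this bookkeeping difference in your final paragraph.
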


\begin{proof}
As in the proof of Lemma~\ref{lem:4n+1}, there is a trisection diagram $(\A,\n,\g)$ for $\T(4n/(4n+3))$ that includes (for $i=1,2$) $\A_i = \varphi(\tau^n(a_i)) \cup \ov{\tau}^n(\ov{a_i})$ as curves in $\A$ bounding disks in $H_1$, and by the same argument, we have
\[ \tau_*^n(a_3) \cup \ov{\tau}^n(\overline{a_1}) = \A_1;\]
\[ \tau_*^n(a_4) \cup \ov{\tau}^n(\overline{a_2}) = \A_2.\]
For $i=1,2$, the curves $\n_i = \tau^n(b_i) \cup  \ov{\tau}^n(\overline{b_i})$ are curves in $\n$, disjoint from the curves in $\g = \{\g_1,\g_2,\g_3,\g_3\}$, the curves
\[ \{ \tau^n(V_{4/3}),\tau^n(V'_{4/3}), \ov{\tau}^n(\overline{V_{4/3}}),\ov{\tau}^n(\overline{V'_{4/3}})\},\]
as asserted by Lemma~\ref{twist}.  Thus, $\n_1$ and $\n_2$ bound disks in the handlebody $H_3$ as well.

Since $\A_1$ meets $\g_1$ once and avoids $\g_2$, while $\A_2$ meets $\g_2$ once and avoids $\g_1$, the same argument as in Lemma~\ref{lem:4n+1} can be used to show that in the inverted handle composition coming from $L(3,2;4/(4n+3))$, co-cores of the 1-handles meet $\Sigma$ in $\n_1$ and $\n_2$, and an attaching link for the 2-handles consists of $\A_1$ and $\A_2$.  Now, using Figures~\ref{fig:4n+3} and~\ref{fig:twist2}, we can read off the relators $r$ and $s$ (following orientations and starting in $F$ at $Q$),

\begin{eqnarray*}
r &=& \ov{y}(\ov{x} \ov{y} \ov{x})^n (yxy \ov{y} x)^n yx = \ov{y}(\ov{x} \ov{y} \ov{x})^n (yx^2)^n yx\\
s &=& \ov{x}(\ov{y}\ov{x}^2)^n \ov{y} \ov{x}\ov{y}(xyxy\ov{y})^nxy = \ov{x}(\ov{y}\ov{x}^2)^n \ov{y} \ov{x}\ov{y}(xyx)^nxy
\end{eqnarray*}

\end{proof}

\begin{figure}[h!]
\begin{subfigure}{.48\textwidth}
  \centering
  \includegraphics[width=.25\linewidth]{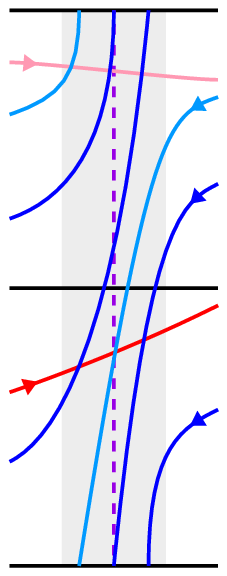}
  \label{fig:braid1}
  \caption{Dehn twisting $b_1$ and $b_2$ about $V_{0/1}$}
\end{subfigure}
\begin{subfigure}{.48\textwidth}
  \centering
  \includegraphics[width=.25\linewidth]{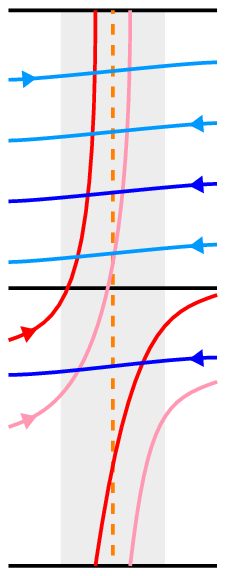}
  \label{fig:braid2}
  \caption{Dehn twisting $a_3$ and $a_4$ about $V^*_{0/1}$}
\end{subfigure}
	\caption{Images of Dehn twists to aid in computations for Lemma~\ref{lem:4n+3}.}
\label{fig:twist2}
\end{figure}

\begin{proposition}\label{prop:4n+3}
The presentation $P(3,2;4/(4n+3))$ is AC-trivial.
\end{proposition}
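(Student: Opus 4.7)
Following the template of Proposition~\ref{prop:4n+1}, I first invoke Lemma~\ref{lem:4n+3} to obtain the relators $r$ and $s$, then repackage them using the easy identities
\[ (yx^2)^n\cdot yx \;=\; yx\cdot(xyx)^n \qquad \text{and} \qquad \ov{x}(\ov{y}\ov{x}^2)^n \;=\; (\ov{x}\ov{y}\ov{x})^n\cdot\ov{x},\]
each of which is a straightforward induction on $n$. Setting $w = (\ov{x}\ov{y}\ov{x})^n$, $u = yx$, and $v = xy$ (so $w^{-1} = (xyx)^n$ and $\ov{u} = \ov{x}\ov{y}$), these identities cast the two relators in the clean form
\[ r \;=\; \ov{y}\cdot w u w^{-1}, \qquad s \;=\; w\ov{u}^2 w^{-1}\cdot v.\]

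The plan is to exploit the shared conjugating block $w$ with a short sequence of AC moves that forces the $w$'s and $w^{-1}$'s to cancel. Apply moves of type (3) to cyclically rotate $r$ to $r' = wuw^{-1}\ov{y}$ and $s$ to $s' = vw\ov{u}^2 w^{-1}$. A single application of move (2) then gives
\[ s' \cdot r' \;=\; vw\ov{u}^2 w^{-1}\cdot wuw^{-1}\ov{y} \;=\; vw\ov{u}w^{-1}\ov{y},\]
since $w^{-1}\cdot w = 1$ and $\ov{u}\cdot u = 1$ in the free group. Cyclically rotating this to $\ov{y}vw\ov{u}w^{-1}$ (one more move of type (3)) and multiplying by $r'$ again triggers the same two cancellations and collapses $s$ to $\ov{y}\cdot v\cdot\ov{y} = \ov{y}x$. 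Thus two applications of move (2), with intervening cyclic rotations, reduce $s$ from a word of length $O(n)$ to the two-letter relator $\ov{y}x$.

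With the presentation now of the form $\langle x,y\mid r,\ov{y}x\rangle$, the short relator $\ov{y}x$ encodes the identification $x=y$. I plan to use it to simplify $r$ by a sequence of standard substitutions, each replacing one occurrence of $x^{\pm 1}$ in $r$ by $y^{\pm 1}$ via a single multiplication of $r$ by an appropriate conjugate of $\ov{y}x$ (combining moves (2) and (3)). Since $r$ contains $4n+1$ letters from $\{x,\ov{x}\}$, this takes $4n+1$ such substitutions, after which $r$ collapses under free reduction to the single letter $y$, because $\ov{y}\cdot(\ov{y}^3)^n\cdot(y^3)^n\cdot y^2 = y$. A final cyclic rotation of $\ov{y}x$ to $x\ov{y}$ followed by multiplication by the current relator $r=y$ converts the second relator to $x$, producing the trivial presentation $\langle x,y\mid y,x\rangle$.

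The main obstacle is the second step, where one must find the cyclic alignments of $r$ and $s$ that allow the $w^{-1}\cdot w$ collapse to occur in full; the factorization identified in the first paragraph essentially forces this alignment and makes the subsequent $\ov{u}\cdot u = 1$ cancellation automatic. The remainder of the plan --- substitution-driven reduction of $r$ followed by the final cleanup --- is bookkeeping-heavy but entirely mechanical once both relators have been brought into a manageable form.
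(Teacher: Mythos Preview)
Your proposal is correct and is essentially identical to the paper's proof: after the same regrouping, your $r'$ and $s'$ are exactly the paper's $r_1$ and $s_1$, the two multiplications $s'\cdot r'$ and $(\ov{y}vw\ov{u}w^{-1})\cdot r'$ coincide with the paper's computations of $s_2$ and $s_4$, and both arguments finish by substituting via the short relator $\ov{y}x$ (you eliminate $x$, the paper eliminates $y$, a cosmetic difference). The abstraction via $w,u,v$ is a nice packaging but does not change the underlying sequence of moves.
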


\begin{proof}
Let $(r,s) = (r_0,s_0)$ be the relators from Lemma~\ref{lem:4n+3}, and note that we can regroup terms to express and $r_0$ and $s_0$ as
\begin{eqnarray*}
r_0 &=& \ov{y}(\ov{x}\ov{y}\ov{x})^nyx(xyx)^n; \\
s_0 &=& (\ov{x}\ov{y}\ov{x})^n\ov{x}\ov{y}\ov{x}\ov{y}(xyx)^nxy.
\end{eqnarray*}
Next, we let $r_1$ and $s_1$ be the result of cyclic permutations of $r_0$ and $s_0$, respectively, so that
\begin{eqnarray*}
r_1 &=& (\ov{x}\ov{y}\ov{x})^nyx(xyx)^n\ov{y}; \\
s_1 &=& xy(\ov{x}\ov{y}\ov{x})^n\ov{x}\ov{y}\ov{x}\ov{y}(xyx)^n.
\end{eqnarray*}
Letting $s_2 = s_1r_1$, we have
\[ s_2 = (xy(\ov{x}\ov{y}\ov{x})^n\ov{x}\ov{y}\ov{x}\ov{y}(xyx)^n)((\ov{x}\ov{y}\ov{x})^nyx(xyx)^n\ov{y}) =  xy(\ov{x}\ov{y}\ov{x})^n\ov{x}\ov{y}(xyx)^n \ov{y}.\]
Cyclically permuting $s_2$ to get $s_3$ yields
\[ s_3 = \ov{y}xy(\ov{x}\ov{y}\ov{x})^n\ov{x}\ov{y}(xyx)^n,\]
and letting $s_4 = s_3r_1$,
\[ s_4 = (\ov{y}xy(\ov{x}\ov{y}\ov{x})^n\ov{x}\ov{y}(xyx)^n)( (\ov{x}\ov{y}\ov{x})^nyx(xyx)^n\ov{y}) = \ov{y}xy(\ov{x}\ov{y}\ov{x})^n (xyx)^n\ov{y} = \ov{y}x.\]
Finally, $s_4$ is the relator $y=x$, and so we can use AC moves to transform $r_1$ into
\[r_2 = \ov{x}^{3n}x^2x^{3n}\ov{x} = x.\]
It follows that $P(3,2;4/(4n+3))$ is AC-trivial.
\end{proof}

\section{Conclusion}\label{sec:quest}

We conclude with a couple of questions to motivate future research in this area.  Recall from the introduction that the GPRC and the Andrews-Curtis conjecture are closely connected.  We have simplified the group presentations, but the motivation was the following related question about the related links.

\begin{question}
Are the links $L(3,2;4/d)$ handle-slide equivalent or stably equivalent to unlinks?
\end{question}

One way to answer this question would be to show that the trisections $\T(4/d)$ are standard and invoke Theorem 3 from~\cite{MZDehn} (for a definition of a \emph{standard} trisection, see~\cite{MZDehn}.)

\begin{question}
Are the trisections $\T(4/d)$ standard?
\end{question}

Notably, it remains open whether there exists a nonstandard trisection of $S^4$.

Finally, it would be interesting to understand whether these techniques can be applied to other families of group presentations, in particular, because in forthcoming work, Meier and the second author are able to show an number of unexpected equivalences between $P(p_1,q_1;c_1/d_1)$ and $P(p_2,q_2;c_2/d_2)$ for various parameters~\cite{MZFut}.

\begin{question}
Can these techniques be extended to show that any presentations of the form $P(5,2;4/d)$ are AC trivial?  What about presentations of the form $P(p,2;4/d)$?
\end{question}

\bibliographystyle{amsalpha}
\bibliography{actrivial}

\end{document}